\newcommand\scalemath[2]{\scalebox{#1}{\mbox{\ensuremath{\displaystyle #2}}}}
\newcommand{\qbinom}{\genfrac{[}{]}{0pt}{}}
\newcommand{\e}{\mathrm{e}}
\newcommand{\h}{\mathrm{h}}
\newcommand{\g}{\mathrm{g}}
\newcommand{\E}{\mathrm{E}}
\newcommand{\T}{\mathrm{T}}
\newcommand{\rr}{\mathrm{r}}
\newcommand{\s}{\mathrm{s}}
\newtheorem{theorem}{Theorem}
\newtheorem{definition}{Definition}
\newtheorem{corollary}{Corollary}
\begin{document}

\title{\textbf{A Note on the Rogers-Szeg\"{o} Polynomial $q$-Differential Operators}}
\author{Ronald Orozco L\'opez}
\newcommand{\Addresses}{{
  \bigskip
  \footnotesize

  \textit{E-mail address}, R.~Orozco: \texttt{rj.orozco@uniandes.edu.co}
  
}}

\maketitle

\begin{abstract}
In this paper, we introduce the Rogers-Szeg\"o deformed $q$-differential operators $\g_{n}(bD_{q}|u)$ based on $q$-differential operator $D_{q}$. The motivation for introducing the operators $\g_{n}(bD_{q})$ is that their limit turns out to be the $q$-exponential operator $\T(bD_{q})$ given by Chen. The deformed homogeneous Al-Salam-Carlitz polynomials $\Psi_{m}^{(q^{-n})}(ub,x|uq^{-1})$ can easily be represented by using the operators $\g_{n}(bD_{q}|u)$. Identities relating the new general Al-Salam-Carlitz polynomial, defined by Cao et al., the generalized, and homogeneous Al-Salam-Carlitz polynomials $\Phi_{m}^{(q^n)}(b,x|q)$ and basic hypergeometric series are given.
\end{abstract}
\noindent 2020 {\it Mathematics Subject Classification}:
Primary 05A30. Secondary 33D45, 33D15.

\noindent \emph{Keywords: } Hahn polynomials, $(s,t)$-exponential operator, Heine binomial operators, Rogers-type formula, Mehler-type formula, basic hypergeometric series.

\section{Introduction}

We begin with some notation and terminology for basic hypergeometric series \cite{gasper}. Let $\vert q\vert<1$ and the $q$-shifted factorial be defined by
\begin{align*}
    (a;q)_{n}&=\prod_{k=0}^{n-1}(1-q^{k}a),\\
    (a;q)_{\infty}&=\lim_{n\rightarrow\infty}(a;q)_{n}=\prod_{k=0}^{\infty}(1-aq^{k}).
\end{align*}
The multiple $q$-shifted factorial is defined by
\begin{align*}
    (a_{1},a_{2},\ldots,a_{m};q)_{n}&=(a_{1};q)_{n}(a_{2};q)_{n}\cdots(a_{m};q)_{n},\\
    (a_{1},a_{2},\ldots,a_{m};q)_{n}&=(a_{1};q)_{\infty}(a_{2};q)_{\infty}\cdots(a_{m};q)_{\infty}.
\end{align*}
The $q$-binomial coefficient is defined by
\begin{equation*}
\qbinom{n}{k}_{q}=\frac{(q;q)_{n}}{(q;q)_{k}(q;q)_{n-k}}.
\end{equation*}
The ${}_{r}\phi_{s}$ basic hypergeometric series is define by
\begin{equation*}
    {}_{r}\phi_{s}\left(
    \begin{array}{c}
         a_{1},a_{2},\ldots,a_{r} \\
         b_{1},\ldots,b_{s}
    \end{array}
    ;q,z
    \right)=\sum_{n=0}^{\infty}\frac{(a_{1},a_{2},\ldots,a_{r};q)_{n}}{(q,b_{1},b_{2},\ldots,b_{s};q)_{n}}[(-1)^{n}q^{\binom{n}{2}}]^{1+s-r}z^n,\ \vert z\vert<1.
\end{equation*}
In this paper, we will frequently use the $q$-binomial theorem:
\begin{equation}
    {}_1\phi_{0}(a;q,z)=\frac{(az;q)_{\infty}}{(z;q)_{\infty}}=\sum_{n=0}^{\infty}\frac{(a;q)_{n}}{(q;q)_{n}}z^{n}.
\end{equation}
The $q$-exponential $\e_{q}(z)$ is defined by
\begin{equation*}
    \e_{q}(z)=\sum_{n=0}^{\infty}\frac{z^n}{(q;q)_{n}}={}_1\phi_{0}\left(\begin{array}{c}
         0\\
         - 
    \end{array};q,-z\right)=\frac{1}{(z;q)_{\infty}}.
\end{equation*}
Another $q$-analogue of the classical exponential function is
\begin{equation*}
    \E_{q}(z)=\sum_{n=0}^{\infty}q^{\binom{n}{2}}\frac{z^n}{(q;q)_{n})}={}_1\phi_{1}\left(\begin{array}{c}
         0\\
         0 
    \end{array};q,-z\right)=(-z;q)_{\infty}.
\end{equation*}
The following easily verified identities will be frequently used in this paper:
\begin{align}
    (a;q)_{n}&=\frac{(a;q)_{\infty}}{(aq^n;q)_{\infty}},\label{eqn_iden1}\\
    (a;q)_{n+k}&=(a;q)_{n}(aq^n;q)_{k},\label{eqn_iden2}\\
    (a;q)_{n-k}&=\frac{(a;q)_{n}}{(a^{-1}q^{1-n};q)_{k}}(-qa^{-1})^{k}q^{\binom{k}{2}-nk}.\label{eqn_iden3}
\end{align}
The $q$-differential operator $D_{q}$ is defined by:
\begin{equation*}
    D_{q}f(x)=\frac{f(x)-f(qx)}{x}
\end{equation*}
and the Leibniz rule for $D_{q}$
\begin{equation}\label{eqn_leibniz}
    D_{q}^{n}\{f(x)g(x)\}=\sum_{k=0}^{n}q^{k(n-k)}\qbinom{n}{k}_{q}D_{q}^{k}\{f(x)\}D_{q}^{n-k}\{g(q^{k}x)\}.
\end{equation}
Chen and Liu \cite{chen1} introduced the $q$-exponential operator
\begin{equation*}
    \T(bD_{q})=\sum_{n=0}^{\infty}\frac{(bD_{q})^n}{(q;q)_{n}}.
\end{equation*}
In this paper, we need the general Al-Salam-Carlitz polynomials, \cite{cao}
\begin{equation}\label{eqn_GASC1}
    \phi_{n}^{
    \scalemath{0.75}{
    \left(
    \begin{array}{c}
         a,b,c  \\
         d,e
    \end{array}
    \right)}
    }
    (x,y|q)=\sum_{k=0}^{n}\qbinom{n}{k}_{q}\frac{(a,b,c;q)_{n}}{(d,e;q)_{n}}x^{n-k}y^k
\end{equation}
and
\begin{equation}\label{eqn_GASC2}
    \psi_{n}^{
    \scalemath{0.75}{
    \left(
    \begin{array}{c}
         a,b,c  \\
         d,e
    \end{array}
    \right)}
    }
    (x,y|q)=\sum_{k=0}^{n}\qbinom{n}{k}_{q}(-1)^{k}q^{k(k-n)}\frac{(a,b,c;q)_{n}}{(d,e;q)_{n}}x^{n-k}y^k,
\end{equation}
with generating functions
\begin{align*}
    \sum_{n=0}^{\infty}\phi_{n}^{
    \scalemath{0.75}{
    \left(
    \begin{array}{c}
         a,b,c  \\
         d,e
    \end{array}
    \right)}
    }
    (x,y|q)\frac{t^n}{(q;q)_{n}}&=\frac{1}{(xt;q)_{\infty}}{}_3\phi_{2}\left(\begin{array}{c}
         a,b,c\\
         d,e
    \end{array};q,yt\right),\\
    \sum_{n=0}^{\infty}\psi_{n}^{
    \scalemath{0.75}{
    \left(
    \begin{array}{c}
         a,b,c  \\
         d,e
    \end{array}
    \right)}
    }
    (x,y|q)\frac{t^n}{(q;q)_{n}}&=\frac{1}{(xt;q)_{\infty}}{}_3\phi_{3}\left(\begin{array}{c}
         a,b,c\\
         0,d,e
    \end{array};q,-yt\right).
\end{align*}
The generalized Al-Salam-Carlitz polynomials are defined by
\begin{equation*}
    \phi_{n}^{(a,b,c)}(x,y|q)=\sum_{k=0}^{n}\qbinom{n}{k}_{q}\frac{(a,b;q)_{k}}{(c;q)_{k}}x^ky^{n-k}
\end{equation*}
and
\begin{equation*}
    \psi_{n}^{(a,b,c)}(x,y|q)=\sum_{k=0}^{n}\qbinom{n}{k}_{q}(-1)^{k}q^{\binom{k+1}{2}-nk}\frac{(a,b;q)_{k}}{(c;q)_{k}}x^ky^{n-k}.
\end{equation*}
The homogeneous Al-Salam-Carlitz polynomials is
\begin{align*}
    \Psi_{n}^{(a)}(x,y|q)&=\sum_{k}^{n}\qbinom{n}{k}_{q}(-1)^kq^{\binom{k+1}{2}-nk}(a;q)_{k}x^{k}y^{n-k}.
\end{align*}
The Hahn polynomials are defined by
\begin{equation}\label{eqn_han}
    \Phi_{n}^{(a)}(x,y|q)=\sum_{k=0}^{n}\qbinom{n}{k}_{q}(a;q)_{k}x^ky^{n-k}.
\end{equation}
The generating function for the homogeneous Hahn polynomials is
\begin{equation*}
    \sum_{n=0}^{\infty}\Phi_{n}^{(\alpha)}(x,y|q)\frac{t^n}{(q;q)_{n}}=\frac{(\alpha xt;q)_{\infty}}{(xt,yt;q)_{\infty}}
\end{equation*}
with $\max\{\vert xt\vert,\vert yt\vert\}<1$. The classical Rogers-Szeg\"o polynomials 
\begin{equation*}
    \h_{n}(x|q)=\sum_{k=0}^{n}\qbinom{n}{k}_{q}x^k
\end{equation*}
is obtained when set $\alpha=0$ and $y=1$ in the homogeneous Hahn polynomials Eq.(\ref{eqn_han}). When $\alpha=0$ in Eq.(\ref{eqn_han}) we obtain the generalized Rogers-Szeg\"o:
\begin{equation*}
    \rr_{n}(x,y|q)=\sum_{k=0}^{n}\qbinom{n}{k}_{q}x^ky^{n-k}.
\end{equation*}

\section{Deformed Rogers-Szeg\"o polynomial operators}

\begin{definition}
Define the deformed Rogers-Szeg\"o operator of degree $n$ based on $D_{q}$ by
\begin{equation}
    \g_{n}\left(bD_{q}|u\right)=\sum_{k=0}^{n}\qbinom{n}{k}_{q}u^{\binom{n-k}{2}}b^kD_{q}^k.
\end{equation}
If $u=1$, define the $n$-th Rogers-Szeg\"o $q$-operator based on $D_{q}$ as
\begin{equation}
    \h_{n}\left(bD_{q}|q\right)=\sum_{k=0}^{n}\qbinom{n}{k}_{q}b^kD_{q}^k.
\end{equation}
If $u=q$, define the $n$-th $q$-shifted factorial operators based on $D_{q}$ as
\begin{equation}
    \s_{n}(bD_{q}|q)=q^{\binom{n}{2}}\sum_{k=0}^{n}\qbinom{n}{k}_{q}q^{\binom{k}{2}}(q^{1-n}b)^{k}D_{q}^{k}=q^{\binom{n}{2}}(-q^{1-n}bD_{q};q)_{n}.
\end{equation}
\end{definition}
Define the deformed $q$-exponential operator as
\begin{equation}
    \T(bD_{q},u)=\sum_{k=0}^{\infty}u^{\binom{k}{2}}\frac{(bD_{q})^k}{(q;q)_{k}}.
\end{equation}
Then
\begin{align*}
    \T(bD_{q},u)&=\lim_{n\rightarrow\infty}u^{-\binom{n}{2}}\g_{n}(u^{n-1}bD_{q}|u),\\
    \T(bD_{q})&=\lim_{n\rightarrow\infty}\h_{n}(bD_{q}|q),\\
    \E(bD_{q})&=\lim_{n\rightarrow\infty}q^{-\binom{n}{2}}\s_{n}(q^{n-1}bD_{q}|q)
\end{align*}
where 
\begin{equation*}
    \E(bD_{q})=\sum_{n=0}^{\infty}q^{\binom{n}{2}}\frac{(bD_{q})^n}{(q;q)_{n}}.
\end{equation*}
Then the operator $\T(bD_{q},u)$ generalizes to the operators $\T(bD_{q})$ and $\E(bD_{q})$.
\begin{definition}
We define the deformed homogeneous Al-Salam-Carlitz polynomials as
\begin{align*}
    \Psi_{n}^{(a)}(x,y|u)&=\sum_{k}^{n}\qbinom{n}{k}_{q}(-1)^ku^{\binom{k+1}{2}-nk}(a;q)_{k}x^{k}y^{n-k}
\end{align*}
and the deformed generalized Al-Salam-Carlitz polynomials as
\begin{equation*}
    \Psi_{n}^{(a,b,c)}(x,y|u)=\sum_{k=0}^{n}\qbinom{n}{k}_{q}u^{\binom{k+1}{2}-nk}\frac{(a,b;q)_{k}}{(c;q)_{k}}x^ky^{n-k}.
\end{equation*}
\end{definition}
\begin{theorem}
    \begin{equation*}
        \g_{n}\left(bD_{q}|u\right)\{x^m\}=
        \begin{cases}
            x^m,&\text{ if }n=0;\\
            u^{\binom{n}{2}}\Psi_{m}^{(q^{-n})}(ub,x|uq^{-1}),&\text{ if }n\geq1.
        \end{cases}
    \end{equation*}
Also,
\begin{equation*}
    \T(bD_{q},u)\{x^m\}=\lim_{n\rightarrow\infty}u^{-\binom{n}{2}}g_{n}(u^{n-1}bD_{q})\{x^m\}=(x\oplus_{1,u}b)_{q}^{(m)}.
\end{equation*}
\end{theorem}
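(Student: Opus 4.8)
The plan is to reduce the whole statement to one computation, namely the action of $D_q^k$ on a monomial, and then to recognize the resulting finite sum as a (deformed) Al-Salam--Carlitz polynomial. The case $n=0$ is immediate, since $\g_0(bD_q|u)=\qbinom{0}{0}_q u^0 b^0 D_q^0$ is the identity operator and hence $\g_0(bD_q|u)\{x^m\}=x^m$. For $n\ge 1$ I would first establish, by induction on $k$ from $D_q\{x^j\}=(1-q^j)x^{j-1}$, that
\[ D_q^k\{x^m\}=\frac{(q;q)_m}{(q;q)_{m-k}}\,x^{m-k}\qquad(0\le k\le m), \]
with $D_q^k\{x^m\}=0$ for $k>m$. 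Substituting this into the definition turns the operator action into the finite sum $\g_n(bD_q|u)\{x^m\}=\sum_{k=0}^{\min(n,m)}\qbinom{n}{k}_q u^{\binom{n-k}{2}}\frac{(q;q)_m}{(q;q)_{m-k}}\,b^k x^{m-k}$, so the assertion becomes an identity between two explicit polynomials in $x$ and $b$.

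Next I would manufacture the ingredients of $\Psi_m^{(q^{-n})}$ out of this sum using two elementary factorial identities. Writing $\frac{(q;q)_m}{(q;q)_{m-k}}=\qbinom{m}{k}_q(q;q)_k$ introduces the binomial $\qbinom{m}{k}_q$ that the homogeneous Al-Salam--Carlitz polynomial carries, while the reversal identity $(q^{-n};q)_k=(-1)^k q^{\binom{k}{2}-nk}\frac{(q;q)_n}{(q;q)_{n-k}}$ (obtained by setting $a=q$ in Eq.~(\ref{eqn_iden3}), or by a one-line factor-by-factor check) lets me replace $\qbinom{n}{k}_q(q;q)_k$ by $(-1)^k q^{nk-\binom{k}{2}}(q^{-n};q)_k$, thereby producing the parameter factor $(q^{-n};q)_k$ and the sign $(-1)^k$ appearing in $\Psi_m^{(q^{-n})}$.

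After these substitutions the coefficient of $b^k x^{m-k}$ is a monomial in $u$ and $q$ times $\qbinom{m}{k}_q(-1)^k(q^{-n};q)_k$, and the crux of the argument is to reorganize that monomial into the shape $u^{\binom{n}{2}}(uq^{-1})^{\binom{k+1}{2}-mk}$ dictated by the deformation base $uq^{-1}$ in $\Psi_m^{(q^{-n})}(\,\cdot\,,x\,|\,uq^{-1})$. This is the one genuinely delicate step: I would split the accumulated exponent of $u$ using the Vandermonde-type identity $\binom{n}{2}=\binom{k}{2}+\binom{n-k}{2}+k(n-k)$ together with $\binom{k+1}{2}=\binom{k}{2}+k$ and $k^2-\binom{k}{2}=\binom{k+1}{2}$, which lets me peel off the global factor $u^{\binom{n}{2}}$ and absorb the remaining mixed $u,q$ powers into a single base $uq^{-1}$ raised to $\binom{k+1}{2}-mk$. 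Matching the surviving $x$- and $b$-arguments term by term then pins down the first entry of $\Psi_m^{(q^{-n})}$ and identifies the sum with $u^{\binom{n}{2}}\Psi_m^{(q^{-n})}(\,\cdot\,,x\,|\,uq^{-1})$. I expect this exponent bookkeeping --- getting the powers of $u$ and of $q$ to line up exactly --- to be the main obstacle, precisely because it is where the deformation parameter and the base interact.

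For the limiting statement the key observation is that, since $D_q^k\{x^m\}=0$ once $k>m$, the operator $u^{-\binom{n}{2}}\g_n(u^{n-1}bD_q|u)$ applied to $x^m$ is a finite sum of at most $m+1$ terms for every $n$; no analytic convergence question arises and I may pass to the limit term by term. A short exponent computation --- again using $\binom{n-k}{2}=\binom{n}{2}-k(n-k)-\binom{k}{2}$ --- collapses the coefficient of $b^kD_q^k$ to $\qbinom{n}{k}_q u^{\binom{k}{2}}$, and since $\qbinom{n}{k}_q\to 1/(q;q)_k$ as $n\to\infty$, the limiting operator is exactly $\T(bD_q,u)=\sum_k u^{\binom{k}{2}}(bD_q)^k/(q;q)_k$. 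Evaluating on $x^m$ and using $\frac{(q;q)_m}{(q;q)_k(q;q)_{m-k}}=\qbinom{m}{k}_q$ yields $\sum_{k=0}^m \qbinom{m}{k}_q u^{\binom{k}{2}} b^k x^{m-k}$, which is by definition the deformed power $(x\oplus_{1,u}b)_q^{(m)}$, completing the statement.
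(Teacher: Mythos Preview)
Your approach is essentially identical to the paper's: both compute $D_q^k\{x^m\}=\frac{(q;q)_m}{(q;q)_{m-k}}x^{m-k}$, trade $\qbinom{n}{k}_q\frac{(q;q)_m}{(q;q)_{m-k}}$ for $\qbinom{m}{k}_q\frac{(q;q)_n}{(q;q)_{n-k}}$, convert $\frac{(q;q)_n}{(q;q)_{n-k}}$ into $(-1)^k q^{nk-\binom{k}{2}}(q^{-n};q)_k$ via Eq.~(\ref{eqn_iden3}), and then regroup the $u$- and $q$-powers into the base $uq^{-1}$. For the limiting assertion both you and the paper observe that the sum truncates at $k=m$ and evaluate $\T(bD_q,u)\{x^m\}$ term by term; your extra remark that $\qbinom{n}{k}_q\to 1/(q;q)_k$ makes the intermediate operator limit explicit but adds nothing beyond the paper's direct computation.
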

\begin{proof}
If $n=0$, $\g_{0}(bD_{q}|q)$ is the identity operator. If $n\neq0$, then
    \begin{align*}
        \g_{n}\left(bD_{q}|u\right)\{x^m\}&=u^{\binom{n}{2}}\sum_{k=0}^{n}\qbinom{n}{k}_{q}u^{\binom{k+1}{2}-nk}b^kD_{q}^k\{x^m\}\\
        &=u^{\binom{n}{2}}\sum_{k=0}^{m}\qbinom{n}{k}_{q}u^{\binom{k+1}{2}-nk}b^k\frac{(q;q)_{m}}{(q;q)_{m-k}}x^{m-k}\\
        &=u^{\binom{n}{2}}\sum_{k=0}^{m}\qbinom{m}{k}_{q}u^{\binom{k+1}{2}-nk}\frac{(q;q)_{n}}{(q;q)_{n-k}}b^kx^{m-k}\\
        &=u^{\binom{n}{2}}\sum_{k=0}^{m}\qbinom{m}{k}_{q}(-1)^{k}(uq^{-1})^{\binom{k}{2}-nk}(q^{-n};q)_{k}(ub)^kx^{m-k}\\
        &=u^{\binom{n}{2}}\Psi_{m}^{(q^{-n})}(ub,x|uq^{-1}).
    \end{align*}
Finally,
\begin{align*}
    \T(bD_{q},u)\{x^m\}&=\sum_{n=0}^{\infty}u^{\binom{n}{2}}\frac{b^nD_{q}^n}{(q;q)_{n}}\{x^m\}\\
    &=\sum_{n=0}^{m}\frac{(q;q)_{m}}{(q;q)_{n}(q;q)_{m-n}}b^nx^{m-n}\\
    &=(x\ominus_{1,uq^{-1}}b)_{q}^{(m)}.
\end{align*}
The proof is completed.    
\end{proof}

\begin{theorem}
    \begin{equation}
        \g_{n}\left(bD_{q}|u\right)\left\{\frac{1}{(ax;q)_{\infty}}\right\}=\frac{u^{\binom{n}{2}}}{(ax;q)_{\infty}}(1\oplus_{1,u}u^{1-n}ab)_{q}^{(n)}.
    \end{equation}
\end{theorem}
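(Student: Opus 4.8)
The plan is to exploit the fact that $\frac{1}{(ax;q)_{\infty}}=\e_{q}(ax)$ is an eigenfunction of $D_{q}$, which collapses the entire operator $\g_{n}(bD_{q}|u)$ into a scalar multiple of the original function. First I would establish the eigenvalue relation $D_{q}\left\{\frac{1}{(ax;q)_{\infty}}\right\}=\frac{a}{(ax;q)_{\infty}}$. This follows directly from $D_{q}f(x)=\frac{f(x)-f(qx)}{x}$ together with the factorization $(ax;q)_{\infty}=(1-ax)(qax;q)_{\infty}$: the difference $\frac{1}{(ax;q)_{\infty}}-\frac{1}{(qax;q)_{\infty}}$ simplifies to $\frac{ax}{(ax;q)_{\infty}}$, and dividing by $x$ gives the claim. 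Equivalently, one expands $\e_{q}(ax)=\sum_{m}\frac{(ax)^{m}}{(q;q)_{m}}$ and applies $D_{q}^{k}\{x^{m}\}=\frac{(q;q)_{m}}{(q;q)_{m-k}}x^{m-k}$ from the proof of Theorem 1. Iterating yields $D_{q}^{k}\left\{\frac{1}{(ax;q)_{\infty}}\right\}=\frac{a^{k}}{(ax;q)_{\infty}}$ for all $k$.

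Next I would substitute this relation into the definition of $\g_{n}(bD_{q}|u)$ and pull the common factor $\frac{1}{(ax;q)_{\infty}}$ out of the sum, obtaining
\begin{equation*}
    \g_{n}(bD_{q}|u)\left\{\frac{1}{(ax;q)_{\infty}}\right\}=\frac{1}{(ax;q)_{\infty}}\sum_{k=0}^{n}\qbinom{n}{k}_{q}u^{\binom{n-k}{2}}(ab)^{k}.
\end{equation*}
All that remains is to identify this finite sum with $u^{\binom{n}{2}}(1\oplus_{1,u}u^{1-n}ab)_{q}^{(n)}$.

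To do so I would use the expansion of the deformed power read off from Theorem 1, namely $(x\oplus_{1,u}c)_{q}^{(n)}=\sum_{k=0}^{n}\qbinom{n}{k}_{q}u^{\binom{k}{2}}c^{k}x^{n-k}$. Setting $x=1$ and $c=u^{1-n}ab$ produces the exponent $\binom{k}{2}+(1-n)k=\binom{k+1}{2}-nk$ on $u$ in the $k$-th term. Multiplying through by $u^{\binom{n}{2}}$ and invoking the elementary identity $\binom{n}{2}+\binom{k+1}{2}-nk=\binom{n-k}{2}$ then matches the two expressions term by term, completing the proof.

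The computation is essentially mechanical, so the only conceptual point to secure is the eigenfunction identity $D_{q}\left\{\frac{1}{(ax;q)_{\infty}}\right\}=\frac{a}{(ax;q)_{\infty}}$; everything downstream is a rearrangement of $q$-binomial coefficients together with the quadratic-exponent bookkeeping identity. I therefore expect no genuine obstacle, only the need to state the definition of $(x\oplus_{1,u}c)_{q}^{(n)}$ consistently with Theorem 1.
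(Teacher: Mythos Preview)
Your proposal is correct and follows essentially the same approach as the paper: apply the eigenfunction relation $D_{q}^{k}\{1/(ax;q)_{\infty}\}=a^{k}/(ax;q)_{\infty}$, factor out $1/(ax;q)_{\infty}$, and identify the remaining finite sum with $u^{\binom{n}{2}}(1\oplus_{1,u}u^{1-n}ab)_{q}^{(n)}$. The only cosmetic difference is that the paper writes the operator from the outset in the form $u^{\binom{n}{2}}\sum_{k}\qbinom{n}{k}_{q}u^{\binom{k+1}{2}-nk}b^{k}D_{q}^{k}$, so the exponent identity $\binom{n}{2}+\binom{k+1}{2}-nk=\binom{n-k}{2}$ that you spell out is already absorbed into the setup; otherwise the arguments coincide.
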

\begin{proof}
    \begin{align*}
        \g_{n}\left(bD_{q}|u\right)\left\{\frac{1}{(ax;q)_{\infty}}\right\}&=u^{\binom{n}{2}}\sum_{k=0}^{n}\qbinom{n}{k}_{q}u^{\binom{k+1}{2}-nk}b^kD_{q}^k\left\{\frac{1}{(ax;q)_{\infty}}\right\}\\
        &=u^{\binom{n}{2}}\sum_{k=0}^{n}\qbinom{n}{k}_{q}u^{\binom{k+1}{2}-nk}b^k\frac{a^k}{(ax;q)_{\infty}}\\
        &=\frac{u^{\binom{n}{2}}}{(ax;q)_{\infty}}(1\oplus_{1,u}u^{1-n}ab)_{q}^{(n)}
    \end{align*}
as claimed.
\end{proof}

\begin{itemize}
    \item When $u=1$,
    \begin{equation*}
        \h_{n}(bD_{q}|q)\left\{\frac{1}{(ax;q)_{\infty}}\right\}=\frac{\h_{n}(ab|q)}{(ax;q)_{\infty}}.
    \end{equation*}
    \item When $u=q$,
    \begin{equation*}
        \s_{n}(bD_{q}|q)\left\{\frac{1}{(ax;q)_{\infty}}\right\}=\frac{q^{\binom{n}{2}}}{(ax;q)_{\infty}}(1\oplus_{1,q}q^{1-n}ab)_{q}^{(n)}=q^{\binom{n}{2}}\frac{(-q^{1-n}ab;q)_{n}}{(ax;q)_{\infty}}.
    \end{equation*}
\end{itemize}

\begin{theorem}
    \begin{equation*}
        \g_{n}\left(bD_{q};u\right)\{(ax;q)_{\infty}\}=(ax;q)_{\infty}\Psi_{n}^{(0,0,a)}(ab,1|u).
    \end{equation*}
\end{theorem}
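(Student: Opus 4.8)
The plan is to reduce everything to the single computation of how $D_{q}^{k}$ acts on the infinite product $(ax;q)_{\infty}$, exactly in the spirit of Theorems 1 and 2. Writing out the definition and using the reindexing $\binom{n-k}{2}=\binom{n}{2}+\binom{k+1}{2}-nk$ (already exploited in the earlier proofs), I would first record
\[
\g_{n}\left(bD_{q}|u\right)\{(ax;q)_{\infty}\}=u^{\binom{n}{2}}\sum_{k=0}^{n}\qbinom{n}{k}_{q}u^{\binom{k+1}{2}-nk}b^{k}D_{q}^{k}\{(ax;q)_{\infty}\}.
\]
The essential difference from Theorem 2 is that, whereas $1/(ax;q)_{\infty}$ is a $D_{q}$-eigenfunction (so that $D_{q}^{k}$ merely produced a factor $a^{k}$), the product $(ax;q)_{\infty}$ is \emph{not}: each application of $D_{q}$ shifts the argument by a factor of $q$. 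Hence the heart of the proof is an auxiliary identity for $D_{q}^{k}\{(ax;q)_{\infty}\}$, and all the bookkeeping downstream depends on getting that shift right.

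For the core computation I would establish
\[
D_{q}^{k}\{(ax;q)_{\infty}\}=(-1)^{k}a^{k}q^{\binom{k}{2}}(aq^{k}x;q)_{\infty}
\]
by induction on $k$. The base step is the one-line calculation $(ax;q)_{\infty}=(1-ax)(aqx;q)_{\infty}$, which gives $D_{q}\{(ax;q)_{\infty}\}=-a(aqx;q)_{\infty}$; iterating, the shifted factor $(aq^{j}x;q)_{\infty}$ contributes $-aq^{j}$ at the $(j+1)$-st step, and summing the exponents $0+1+\cdots+(k-1)=\binom{k}{2}$ yields the claim. As a cross-check (and an alternative route that avoids the induction), I would apply $D_{q}^{k}$ termwise to the Euler expansion $(ax;q)_{\infty}=\E_{q}(-ax)=\sum_{j\ge0}(-1)^{j}q^{\binom{j}{2}}(ax)^{j}/(q;q)_{j}$ using $D_{q}^{k}\{x^{j}\}=\tfrac{(q;q)_{j}}{(q;q)_{j-k}}x^{j-k}$ and the splitting $\binom{i+k}{2}=\binom{i}{2}+\binom{k}{2}+ik$ to resum into $(-1)^{k}a^{k}q^{\binom{k}{2}}\E_{q}(-aq^{k}x)$.

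It then remains to substitute this into the displayed sum, pull the common factor $(ax;q)_{\infty}$ out of the product via identity~(\ref{eqn_iden1}) in the form $(aq^{k}x;q)_{\infty}=(ax;q)_{\infty}/(ax;q)_{k}$, and collect the surviving coefficients into the deformed generalized Al-Salam--Carlitz polynomial. Since $(0,0;q)_{k}=1$, the target $\Psi_{n}^{(0,0,a)}(ab,1|u)=\sum_{k}\qbinom{n}{k}_{q}u^{\binom{k+1}{2}-nk}(ab)^{k}/(a;q)_{k}$ has exactly the right shape $\qbinom{n}{k}_{q}u^{\binom{k+1}{2}-nk}(ab)^{k}/(\,\cdot\,)_{k}$, so the final step is a term-by-term identification. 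I expect this last matching to be the main obstacle: one must reconcile the sign $(-1)^{k}$ and the factor $q^{\binom{k}{2}}$ generated by $D_{q}^{k}$, together with the $q$-shifted denominator $(ax;q)_{k}$ coming from the argument shift, against the purely $u$-weighted coefficients in the definition of $\Psi_{n}^{(0,0,a)}$. Verifying the case $n=1$ first is the cheapest way to pin down the precise normalization and to confirm that the $u^{\binom{n}{2}}$ factor and the auxiliary parameters in $\Psi$ are recorded consistently before carrying the identification through to general $n$.
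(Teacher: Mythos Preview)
Your plan is exactly the paper's: expand $\g_{n}(bD_{q}|u)$, evaluate $D_{q}^{k}\{(ax;q)_{\infty}\}$, factor out $(ax;q)_{\infty}$ via $(aq^{k}x;q)_{\infty}=(ax;q)_{\infty}/(ax;q)_{k}$, and read off a $\Psi$-polynomial. There is no alternative decomposition or extra lemma in the paper; the only substantive step is the $D_{q}^{k}$ computation.

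Where you and the paper diverge is precisely the point you flagged as the ``main obstacle''. Your formula
\[
D_{q}^{k}\{(ax;q)_{\infty}\}=(-1)^{k}a^{k}q^{\binom{k}{2}}(aq^{k}x;q)_{\infty}
\]
is correct (your induction and the Euler-series check both work). The paper, however, writes $D_{q}^{k}\{(ax;q)_{\infty}\}=a^{k}(aq^{k}x;q)_{\infty}$ without the sign or the $q^{\binom{k}{2}}$, and then identifies the resulting sum with $\Psi_{n}^{(0,0,a)}(ab,1|u)$. So the mismatch you anticipated is not a gap in your argument but a genuine inconsistency in the paper: with the correct $D_{q}^{k}$ formula one obtains
\[
u^{\binom{n}{2}}(ax;q)_{\infty}\sum_{k=0}^{n}\qbinom{n}{k}_{q}\frac{u^{\binom{k+1}{2}-nk}}{(ax;q)_{k}}(-1)^{k}q^{\binom{k}{2}}(ab)^{k},
\]
and three discrepancies with the stated right-hand side remain: the prefactor $u^{\binom{n}{2}}$ (present in the paper's own proof but absent from the statement), the denominator parameter (the computation produces $(ax;q)_{k}$, so the third superscript should be $ax$, not $a$), and the extra $(-1)^{k}q^{\binom{k}{2}}$, which does not match the definition of $\Psi_{n}^{(0,0,\cdot)}(\,\cdot\,,\,\cdot\,|u)$ as a purely $u$-weighted sum. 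Your instinct to test $n=1$ would have exposed all three immediately; the method is sound, the target identity is misstated.
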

\begin{proof}
    \begin{align*}
        \g_{n}\left(bD_{q};u\right)\{(ax;q)_{\infty}\}&=u^{\binom{n}{2}}\sum_{k=0}^{n}\qbinom{n}{k}_{q}u^{\binom{k+1}{2}-nk}b^kD_{q}^k\{(ax;q)_{\infty}\}\\
        &=u^{\binom{n}{2}}\sum_{k=0}^{n}\qbinom{n}{k}_{q}u^{\binom{k+1}{2}-nk}b^ka^k(aq^kx,q)_{\infty}\\
        &=u^{\binom{n}{2}}(ax;q)_{\infty}\sum_{k=0}^{n}\qbinom{n}{k}_{q}\frac{u^{\binom{k+1}{2}-nk}}{(ax;q)_{k}}(ab)^k\\
        &=u^{\binom{n}{2}}(ax;q)_{\infty}\Psi_{n}^{(0,0,a)}(ab,1|u)
    \end{align*}
as claimed.
\end{proof}

\begin{theorem}
    \begin{equation*}
        \g_{n}(aD_{q}|u)\left\{\frac{1}{(bx,cx;q)_{\infty}}\right\}=\frac{u^{\binom{n}{2}}}{(bx,cx;q)_{\infty}}\sum_{k=0}^{n}\qbinom{n}{k}_{q}u^{\binom{k}{2}}(u^{1-n}a)^{k}\Phi_{k}^{(cx)}(b,c|q).
    \end{equation*}
\end{theorem}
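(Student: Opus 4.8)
The plan is to reduce everything to the single-variable $q$-derivative computation already carried out in the proof of Theorem 2, namely $D_{q}^{k}\{1/(ax;q)_{\infty}\}=a^{k}/(ax;q)_{\infty}$, and then to peel off the second factor with the Leibniz rule. First I would expand $\g_{n}(aD_{q}|u)$ from its definition and pull out the global weight exactly as in the previous proofs, writing $u^{\binom{n-k}{2}}=u^{\binom{n}{2}}u^{\binom{k+1}{2}-nk}$ and $u^{\binom{k+1}{2}-nk}=u^{\binom{k}{2}}(u^{1-n})^{k}$, so that
\begin{equation*}
\g_{n}(aD_{q}|u)\left\{\frac{1}{(bx,cx;q)_{\infty}}\right\}=u^{\binom{n}{2}}\sum_{k=0}^{n}\qbinom{n}{k}_{q}u^{\binom{k}{2}}(u^{1-n}a)^{k}\,D_{q}^{k}\left\{\frac{1}{(bx,cx;q)_{\infty}}\right\}.
\end{equation*}
This shows the theorem is equivalent to the inner claim $D_{q}^{k}\{1/(bx,cx;q)_{\infty}\}=\Phi_{k}^{(cx)}(b,c|q)/(bx,cx;q)_{\infty}$ for every $k$; once that identity is in hand, reassembling the $k$-sum gives the statement verbatim.

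For the inner identity I would factor $1/(bx,cx;q)_{\infty}=\bigl(1/(bx;q)_{\infty}\bigr)\bigl(1/(cx;q)_{\infty}\bigr)$ and apply the Leibniz rule (\ref{eqn_leibniz}) with $f=1/(bx;q)_{\infty}$ and $g=1/(cx;q)_{\infty}$. The first factor contributes $D_{q}^{j}\{f\}=b^{j}/(bx;q)_{\infty}$ by the computation recalled from Theorem 2. For the second I would use the dilation $g(q^{j}x)=1/(cq^{j}x;q)_{\infty}$, which is again of the form $1/(\alpha x;q)_{\infty}$ with $\alpha=cq^{j}$, so that $D_{q}^{k-j}\{g(q^{j}x)\}=(cq^{j})^{k-j}/(cq^{j}x;q)_{\infty}$, and then rewrite $1/(cq^{j}x;q)_{\infty}=(cx;q)_{j}/(cx;q)_{\infty}$ via identity (\ref{eqn_iden1}). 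After factoring out $1/(bx,cx;q)_{\infty}$ the inner sum should reduce to $\sum_{j=0}^{k}\qbinom{k}{j}_{q}(cx;q)_{j}b^{j}c^{k-j}$, which is precisely $\Phi_{k}^{(cx)}(b,c|q)$ by the definition of the Hahn polynomials (\ref{eqn_han}).

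The step needing the most care — and the one I expect to be the main obstacle — is the bookkeeping of the powers of $q$ in the middle of the previous paragraph. The Leibniz weight attached to the index $j$ and the factor $q^{j(k-j)}$ hidden inside $(cq^{j})^{k-j}=c^{k-j}q^{j(k-j)}$ must combine to cancel, so that the coefficient of $b^{j}c^{k-j}(cx;q)_{j}$ is free of $q$ and the inner sum collapses exactly to the Hahn polynomial rather than to a $q$-weighted deformation of it; getting this cancellation right is the crux. A convenient consistency check is the manifest symmetry of $1/(bx,cx;q)_{\infty}$ in $b$ and $c$: applying the Leibniz rule with the two factors interchanged yields $\Phi_{k}^{(bx)}(c,b|q)$, and verifying $\Phi_{k}^{(bx)}(c,b|q)=\Phi_{k}^{(cx)}(b,c|q)$ (immediate for small $k$) both validates the cancellation and confirms that the asymmetric-looking right-hand side is well defined.
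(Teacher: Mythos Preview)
Your proposal is correct and follows essentially the same route as the paper: expand $\g_{n}(aD_{q}|u)$, apply the Leibniz rule to $D_{q}^{k}\{1/(bx,cx;q)_{\infty}\}$, use $D_{q}^{j}\{1/(\alpha x;q)_{\infty}\}=\alpha^{j}/(\alpha x;q)_{\infty}$ on each factor, observe that the Leibniz weight cancels the $q^{j(k-j)}$ coming from $(cq^{j})^{k-j}$, and recognize the resulting sum as $\Phi_{k}^{(cx)}(b,c|q)$. The cancellation you flag as the crux is exactly the step the paper passes through, and your symmetry check is a harmless extra sanity test not present there.
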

\begin{proof}
From Eq.(\ref{eqn_leibniz}), we get
\begin{align*}
    &\g_{n}(aD_{q}|u)\left\{\frac{1}{(bx,cx;q)_{\infty}}\right\}\\
        &\hspace{0.5cm}=u^{\binom{n}{2}}\sum_{k=0}^{n}\qbinom{n}{k}_{q}u^{\binom{k+1}{2}-nk}a^{k}D_{q}^{k}\left\{\frac{1}{(bx,cx;q)_{\infty}}\right\}\\
        &\hspace{0.5cm}=u^{\binom{n}{2}}\sum_{k=0}^{n}\qbinom{n}{k}_{q}u^{\binom{k+1}{2}-nk}a^{k}\sum_{m=0}^{k}q^{m(m-k)}\qbinom{k}{m}_{q}D_{q}^{m}\left\{\frac{1}{(bx;q)_{\infty}}\right\}D_{q}^{k-m}\left\{\frac{1}{(cxq^{m};q)_{\infty}}\right\}\\
        &\hspace{0.5cm}=u^{\binom{n}{2}}\sum_{k=0}^{n}\qbinom{n}{k}_{q}u^{\binom{k+1}{2}-nk}a^{k}\sum_{m=0}^{k}\qbinom{k}{m}_{q}\frac{b^m}{(bx;q)_{\infty}}\frac{c^{k-m}}{(cxq^{m};q)_{\infty}}\\
        &\hspace{0.5cm}=\frac{u^{\binom{n}{2}}}{(bx,cx;q)_{\infty}}\sum_{k=0}^{n}\qbinom{n}{k}_{q}u^{\binom{k}{2}}(u^{1-n}a)^{k}\sum_{m=0}^{k}\qbinom{k}{m}_{q}(cx;q)_{m}b^mc^{k-m}\\
        &\hspace{0.5cm}=\frac{u^{\binom{n}{2}}}{(bx,cx;q)_{\infty}}\sum_{k=0}^{n}\qbinom{n}{k}_{q}u^{\binom{k}{2}}(u^{1-n}a)^{k}\Phi_{k}^{(cx)}(b,c|q)
\end{align*}
as claimed.
\end{proof}

\begin{theorem}
    \begin{equation*}
        \sum_{k=0}^{\infty}\Phi_{k}^{(cx)}(b,c|q)\frac{a^{k}}{(q;q)_{k}}=\frac{(abcx;q)_{\infty}}{(ac,ab;q)_{\infty}}.
    \end{equation*}
\end{theorem}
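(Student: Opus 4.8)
The plan is to recognize the left-hand side as a direct specialization of the generating function for the homogeneous Hahn polynomials already recorded in the introduction, namely $\sum_{n\geq0}\Phi_n^{(\alpha)}(x,y|q)t^n/(q;q)_n=(\alpha xt;q)_\infty/(xt,yt;q)_\infty$. Here the superscript parameter $\alpha$ is independent of the two homogeneous arguments, so I may set $\alpha=cx$, replace the homogeneous variables by $b$ and $c$, and take the generating variable to be $a$. With these choices the right-hand side of the generating function becomes $(cx\cdot b\cdot a;q)_\infty/(ba,ca;q)_\infty=(abcx;q)_\infty/(ab,ac;q)_\infty$, which is exactly the asserted closed form. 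Thus in principle the theorem is a one-line consequence of a stated identity, provided one is comfortable substituting $\alpha=cx$ (legitimate, since $x$ is merely a constant in the summation over $k$).

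Since it is cleaner to argue from the definition, I would instead expand $\Phi_k^{(cx)}(b,c|q)=\sum_{m=0}^k\qbinom{k}{m}_q(cx;q)_m b^m c^{k-m}$, insert it into the sum, and use the factorisation $\qbinom{k}{m}_q/(q;q)_k=1/[(q;q)_m(q;q)_{k-m}]$ to rewrite the double sum with separated factorials. Reindexing by $j=k-m$ decouples the two indices and factors the expression as a product of two independent single series: one over $m$ carrying the weight $(cx;q)_m(ab)^m/(q;q)_m$, and one over $j$ carrying $(ac)^j/(q;q)_j$.

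The first factor is summed by the $q$-binomial theorem ${}_1\phi_0(cx;q,ab)=(abcx;q)_\infty/(ab;q)_\infty$, and the second is the $q$-exponential $\e_q(ac)=1/(ac;q)_\infty$; multiplying the two yields $(abcx;q)_\infty/(ab,ac;q)_\infty$, as required. The only point needing care --- and the main, if minor, obstacle --- is justifying the interchange of the order of summation and the rearrangement into a product, which is valid under the convergence hypotheses $|ab|<1$ and $|ac|<1$ (matching the $\max\{|xt|,|yt|\}<1$ condition attached to the Hahn generating function); once absolute convergence is in force, the reindexing and factorisation are unconditional and the identity follows.
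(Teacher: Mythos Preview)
Your argument is correct. Both variants you sketch --- the one-line specialization of the Hahn generating function $\sum_{n}\Phi_{n}^{(\alpha)}(x,y|q)\,t^{n}/(q;q)_{n}=(\alpha xt;q)_{\infty}/(xt,yt;q)_{\infty}$ with $(\alpha,x,y,t)\mapsto(cx,b,c,a)$, and the direct expand--reindex--factor computation followed by the $q$-binomial theorem and $\e_{q}$ --- are valid and give the stated closed form under $\max\{|ab|,|ac|\}<1$.

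The paper, however, proves it by a genuinely different route: it evaluates $\T(aD_{q})\{1/(bx,cx;q)_{\infty}\}$ in two ways. First, taking $u=1$ and $n\to\infty$ in the preceding theorem on $\g_{n}(aD_{q}|u)\{1/(bx,cx;q)_{\infty}\}$ yields $\dfrac{1}{(bx,cx;q)_{\infty}}\sum_{k\ge0}\Phi_{k}^{(cx)}(b,c|q)\,a^{k}/(q;q)_{k}$; second, the Chen--Liu identity gives the same operator action as $(abcx;q)_{\infty}/(bx,cx,ac,ab;q)_{\infty}$. Cancelling the common factor $1/(bx,cx;q)_{\infty}$ produces the theorem. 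So the paper's proof is an operator-theoretic double count that relies on an external result of Chen and Liu, whereas your proof is a self-contained series manipulation (indeed, essentially the standard proof of the Hahn generating function itself). Your approach is more elementary and needs no outside citation; the paper's approach is chosen because it exhibits the identity as a byproduct of the Rogers--Szeg\"o operator formalism that is the paper's theme.
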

\begin{proof}
On the one side, from the above theorem with $u=1$
\begin{align*}
    \T(aD_{q})\left\{\frac{1}{(bc,cx;q)_{\infty}}\right\}&=\lim_{n\rightarrow\infty}\frac{1}{(bx,cx;q)_{\infty}}\sum_{k=0}^{n}\qbinom{n}{k}_{q}a^{k}\Phi_{k}^{(cx)}(b,c|q)\\
    &=\frac{1}{(bx,cx;q)_{\infty}}\sum_{k=0}^{\infty}\Phi_{k}^{(cx)}(b,c|q)\frac{a^{k}}{(q;q)_{k}}.
\end{align*}
On the other hand, from \cite{chen2}
\begin{align*}
    \T(aD_{q})\left\{\frac{1}{(bx,cx;q)_{\infty}}\right\}&=\frac{(a b cx;q)_{\infty}}{(bx,cx,ac,ab;q)_{\infty}}.
\end{align*}
\end{proof}

\begin{theorem}
    \begin{equation*}
        \E(aD_{q})\left\{\frac{1}{(bx,cx;q)_{\infty}}\right\}=\frac{(ac;q)_{\infty}}{(bx,cx;q)_{\infty}}{}_2\phi_{1}\left(\begin{array}{c}
         cx,0\\
         ac
    \end{array};q,ab\right).
    \end{equation*}
\end{theorem}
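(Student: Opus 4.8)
The plan is to treat $\E(aD_q)$ as exactly the case $u=q$ of the machinery already in place, in direct parallel with the proof of the preceding theorem (which is the case $u=1$, i.e. $\T(aD_q)$). Recall from the displayed limit relations that $\E(aD_q)=\lim_{n\to\infty}q^{-\binom{n}{2}}\g_{n}(q^{n-1}aD_{q}|q)$. First I would apply the fourth theorem, on $\g_{n}(aD_{q}|u)\{1/(bx,cx;q)_{\infty}\}$, with $u=q$ and with $a$ replaced by $q^{n-1}a$; then the factor $u^{1-n}a=q^{1-n}\cdot q^{n-1}a=a$ collapses and a single power $q^{\binom{n}{2}}$ factors out. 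Dividing by $q^{\binom{n}{2}}$ and letting $n\to\infty$, using $\lim_{n\to\infty}\qbinom{n}{k}_{q}=1/(q;q)_{k}$, reduces the entire statement to the evaluation of
\begin{equation*}
\E(aD_{q})\left\{\frac{1}{(bx,cx;q)_{\infty}}\right\}=\frac{1}{(bx,cx;q)_{\infty}}\sum_{k=0}^{\infty}q^{\binom{k}{2}}\frac{a^{k}}{(q;q)_{k}}\Phi_{k}^{(cx)}(b,c|q).
\end{equation*}

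Next I would evaluate this sum the same way the $\T$-case sum was handled in the fifth theorem. Expanding $\Phi_{k}^{(cx)}(b,c|q)=\sum_{m=0}^{k}\qbinom{k}{m}_{q}(cx;q)_{m}b^{m}c^{k-m}$, interchanging the order of summation, and setting $k=m+j$, the $q$-binomial coefficients cancel against $1/(q;q)_{k}$, and the weight splits through $\binom{m+j}{2}=\binom{m}{2}+\binom{j}{2}+mj$ into a product of two single sums. The inner sum over $j$ is $\sum_{j\ge 0}q^{\binom{j}{2}}(acq^{m})^{j}/(q;q)_{j}=\E_{q}(acq^{m})=(-acq^{m};q)_{\infty}$, which is the $\E_{q}$-analogue of the $\e_{q}(ac)=1/(ac;q)_{\infty}$ evaluation that appeared in the $\T$-case. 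Writing $(-acq^{m};q)_{\infty}=(-ac;q)_{\infty}/(-ac;q)_{m}$ and factoring it out leaves a single sum over $m$ in the variable $ab$, which is then to be identified with the basic hypergeometric series in the statement and the prefactor collected.

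The main obstacle is precisely this last identification. In contrast with the $\T$-case of the previous theorem, where the inner $j$-sum collapses via the plain $q$-binomial theorem to $1/(ac;q)_{\infty}$ and the outer sum is immediately a $_2\phi_1$-type object, the extra weight $q^{\binom{k}{2}}$ carried by $\E$ forces the inner sum to be an $\E_{q}$, producing $(-acq^{m};q)_{\infty}$ and a residual $m$-series that still carries a Gaussian weight $q^{\binom{m}{2}}$. Since a $_2\phi_1$ (for which $1+s-r=0$) carries no such weight, the crux is to reconcile this $\E_{q}$-type output with the stated $_2\phi_1$ and its $(ac;q)_{\infty}$ prefactor; I expect this to require a careful sign and base transformation of the $q$-shifted factorials (or an appropriate known transformation between $\E_{q}$- and $\e_{q}$-type series), together with a check of the convergence region $\max\{|bx|,|cx|\}<1$ under which the termwise action of $\E(aD_{q})$ and the interchange of summations are legitimate. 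This reconciliation, rather than any of the routine algebra, is the step I expect to be the genuine difficulty.
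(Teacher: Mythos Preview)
Your plan is exactly the paper's approach. The paper packages your sum--interchange as an operator identity, writing $\Phi_{k}^{(cx)}(b,c|q)=\T(cx,b;D_{q})\{c^{k}\}$ for the Cauchy companion operator and then pulling $\T(cx,b;D_{q})$ through the $k$-sum; after that it evaluates the inner sum to an $\E_{q}$ and applies $\T(cx,b;D_{q})$ termwise. Substantively this is the same decomposition $k=m+j$ and the same inner $\E_{q}(acq^{m})$ that you wrote down.

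Your suspicion about the final identification is not a gap in your argument---it is a symptom of sign slips in the paper. In the paper's proof the step
\[
\sum_{k\ge0}q^{\binom{k}{2}}\frac{(ac)^{k}}{(q;q)_{k}}=(ac;q)_{\infty}
\]
is incorrect: by the paper's own definition $\E_{q}(z)=(-z;q)_{\infty}$, so the sum equals $(-ac;q)_{\infty}$. The next line, $D_{q}^{k}\{(ac;q)_{\infty}\}=a^{k}(acq^{k};q)_{\infty}$, is also off by $(-1)^{k}$. These two errors cancel, which is why the paper lands on the stated ${}_{2}\phi_{1}$. Your correct computation yields
\[
\frac{(-ac;q)_{\infty}}{(bx,cx;q)_{\infty}}\sum_{m\ge0}q^{\binom{m}{2}}\frac{(cx;q)_{m}}{(q;q)_{m}(-ac;q)_{m}}(ab)^{m}
=\frac{(-ac;q)_{\infty}}{(bx,cx;q)_{\infty}}\,{}_{1}\phi_{1}\!\left(\begin{array}{c}cx\\ -ac\end{array};q,-ab\right),
\]
and this is the form the theorem should carry. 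The specialization $b=0$ already separates the two: the left side becomes $(-ac;q)_{\infty}/(cx;q)_{\infty}$, while the stated right side gives $(ac;q)_{\infty}/(cx;q)_{\infty}$. So there is no hidden transformation to search for; your derivation is correct and the discrepancy lies in the statement (equivalently, in the two compensating sign errors of the paper's proof).
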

\begin{proof}
On the one side, from the above theorem with $u=q$
\begin{align*}
    \E(aD_{q})\left\{\frac{1}{(bx,cx;q)_{\infty}}\right\}&=\lim_{n\rightarrow\infty}u^{-\binom{n}{2}}\s_{n}(q^{n-1}aD_{q}|q)\\
    &=\lim_{n\rightarrow\infty}\frac{1}{(bx,cx;q)_{\infty}}\sum_{k=0}^{n}\qbinom{n}{k}_{q}q^{\binom{k}{2}}a^{k}\Phi_{k}^{(cx)}(b,c|q)\\
    &=\frac{1}{(bx,cx;q)_{\infty}}\sum_{k=0}^{\infty}q^{\binom{k}{2}}\Phi_{k}^{(cx)}(b,c|q)\frac{a^{k}}{(q;q)_{k}}\\
    &=\frac{1}{(bx,cx;q)_{\infty}}\sum_{k=0}^{\infty}q^{\binom{k}{2}}\T(cx,b;D_{q})\{c^{k}\}\frac{a^{k}}{(q;q)_{k}}\\
    &=\frac{1}{(bx,cx;q)_{\infty}}\T(cx,b;D_{q})\left\{\sum_{k=0}^{\infty}q^{\binom{k}{2}}\frac{(ac)^{k}}{(q;q)_{k}}\right\}\\
    &=\frac{1}{(bx,cx;q)_{\infty}}\T(cx,b;D_{q})\left\{(ac;q)_{\infty}\right\}\\
    &=\frac{1}{(bx,cx;q)_{\infty}}\sum_{k=0}^{\infty}\frac{(cx;q)_{k}}{(q;q)_{k}}b^kD_{q}^k\left\{(ac;q)_{\infty}\right\}\\
    &=\frac{1}{(bx,cx;q)_{\infty}}\sum_{k=0}^{\infty}\frac{(cx;q)_{k}}{(q;q)_{k}}b^ka^k(acq^k;q)_{\infty}\\
    &=\frac{(ac;q)_{\infty}}{(bx,cx;q)_{\infty}}\sum_{k=0}^{\infty}\frac{(cx;q)_{k}}{(q;q)_{k}(ac;q)_{k}}(ab)^k\\
    &=\frac{(ac;q)_{\infty}}{(bx,cx;q)_{\infty}}{}_2\phi_{1}\left(\begin{array}{c}
         cx,0\\
         ac
    \end{array};q,ab\right)
\end{align*}
as claimed.
\end{proof}

\begin{theorem}\label{theo_abc}
    \begin{multline*}
        \g_{n}(dD_{q}|u)\left\{\frac{(cx;q)_{\infty}}{(ax,bx;q)_{\infty}}\right\}\\
        =\frac{(cx;q)_{\infty}}{(ax,bx;q)_{\infty}}u^{\binom{n}{2}}\sum_{l=0}^{n}\sum_{m=0}^{l}\qbinom{n}{l}\qbinom{l}{m}_{q}u^{\binom{l}{2}}\frac{\Phi_{m}^{(bx)}(a,b)}{(cx;q)_{l}}(u^{1-n}d)^lc^{l-m}.
    \end{multline*}
\end{theorem}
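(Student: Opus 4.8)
The plan is to proceed exactly as in the proof of the earlier theorem evaluating $\g_{n}(aD_{q}|u)\{1/(bx,cx;q)_{\infty}\}$, the only new feature being the extra numerator factor $(cx;q)_{\infty}$ in the operand. First I would rewrite the operator in its factored form
\[
\g_{n}(dD_{q}|u)\left\{\frac{(cx;q)_{\infty}}{(ax,bx;q)_{\infty}}\right\}=u^{\binom{n}{2}}\sum_{l=0}^{n}\qbinom{n}{l}_{q}u^{\binom{l+1}{2}-nl}d^{l}D_{q}^{l}\left\{\frac{(cx;q)_{\infty}}{(ax,bx;q)_{\infty}}\right\},
\]
and immediately absorb the exponent using $u^{\binom{l+1}{2}-nl}d^{l}=u^{\binom{l}{2}}(u^{1-n}d)^{l}$, which already produces the factors $u^{\binom{l}{2}}$ and $(u^{1-n}d)^{l}$ that appear in the claim. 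This reduces the whole problem to computing $D_{q}^{l}$ of the quotient.

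For that $q$-derivative I would apply the Leibniz rule Eq.(\ref{eqn_leibniz}) with the splitting $f(x)=1/(ax,bx;q)_{\infty}$ and $g(x)=(cx;q)_{\infty}$, which introduces an inner sum over $m$ from $0$ to $l$. The two resulting pieces are evaluated separately. For the reciprocal factor I would reuse the identity extracted in the proof of the $\g_{n}(aD_{q}|u)\{1/(bx,cx;q)_{\infty}\}$ theorem, namely $D_{q}^{m}\{1/(ax,bx;q)_{\infty}\}=\Phi_{m}^{(bx)}(a,b)/(ax,bx;q)_{\infty}$; this is precisely where the Hahn polynomial $\Phi_{m}^{(bx)}(a,b)$ enters. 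For the polynomial factor I would use $D_{q}^{l-m}\{(cq^{m}x;q)_{\infty}\}=(cq^{m})^{l-m}(cq^{l}x;q)_{\infty}$, the shifted form of the relation $D_{q}^{j}\{(cx;q)_{\infty}\}=c^{j}(cq^{j}x;q)_{\infty}$ used when evaluating $\g_{n}$ on $(ax;q)_{\infty}$.

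The crucial simplification is that the $q$-power $q^{m(l-m)}$ carried by $(cq^{m})^{l-m}$ cancels against the factor $q^{m(m-l)}$ supplied by the Leibniz step, leaving exactly $c^{l-m}$. After this cancellation the $g$-part contributes $(cq^{l}x;q)_{\infty}$, which I would rewrite via Eq.(\ref{eqn_iden1}) as $(cx;q)_{\infty}/(cx;q)_{l}$, thereby producing both the front factor $(cx;q)_{\infty}/(ax,bx;q)_{\infty}$ and the denominator $(cx;q)_{l}$. Reassembling the outer sum over $l$ with the inner sum over $m$ then reproduces the double sum in the statement. The step I expect to be the main obstacle is purely the exponent/shift bookkeeping inside the Leibniz expansion — in particular tracking how the $q^{m}$-shift in $g(q^{m}x)$ feeds through the chain rule so that all spurious $q$-powers cancel and one is left with the clean factors $c^{l-m}$ and $1/(cx;q)_{l}$ — rather than any conceptual difficulty, since the structure is inherited directly from the earlier theorems.
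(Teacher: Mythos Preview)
Your proposal is correct and follows essentially the same route as the paper: expand $\g_{n}(dD_{q}|u)$ in powers of $D_{q}$, apply the Leibniz rule with the same splitting $f(x)=1/(ax,bx;q)_{\infty}$, $g(x)=(cx;q)_{\infty}$, invoke the identity $D_{q}^{m}\{1/(ax,bx;q)_{\infty}\}=\Phi_{m}^{(bx)}(a,b)/(ax,bx;q)_{\infty}$, and then simplify the remaining factor $(cq^{l}x;q)_{\infty}$ to $(cx;q)_{\infty}/(cx;q)_{l}$. Your explicit remark about the cancellation of $q^{m(l-m)}$ against the Leibniz weight $q^{m(m-l)}$ is exactly the bookkeeping the paper performs silently between its third and fourth displayed lines.
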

\begin{proof}
From Eq.(\ref{eqn_leibniz}), we get
    \begin{align*}
        &\g_{n}(dD_{q}|u)\left\{\frac{(cx;q)_{\infty}}{(ax,bx;q)_{\infty}}\right\}\\
        &\hspace{0.5cm}=u^{\binom{n}{2}}\sum_{l=0}^{n}\qbinom{n}{l}u^{\binom{l+1}{2}-nl}d^lD_{q}^{l}\left\{\frac{(cx;q)_{\infty}}{(ax,bx;q)_{\infty}}\right\}\\
        &\hspace{0.5cm}=u^{\binom{n}{2}}\sum_{l=0}^{n}\qbinom{n}{l}u^{\binom{l}{2}}(u^{1-n}d)^l\sum_{m=0}^{l}\qbinom{l}{m}_{q}q^{m(m-l)}D_{q}^{m}\left\{\frac{1}{(ax,bx;q)_{\infty}}\right\}D_{q}^{l-m}\{(q^{m}cx;q)_{\infty}\}.
    \end{align*}
As
\begin{align*}
    D_{q}^{m}\left\{\frac{1}{(ax,bx;q)_{\infty}}\right\}&=\sum_{k=0}^{m}q^{k(k-m)}\qbinom{m}{k}_{q}D_{q}^{k}\left\{\frac{1}{(ax;q)_{\infty}}\right\}D_{q}^{m-k}\left\{\frac{1}{(bxq^{k};q)_{\infty}}\right\}\\
    &=\sum_{k=0}^{m}q^{k(k-m)}\qbinom{m}{k}_{q}\frac{a^k}{(ax;q)_{\infty}}\frac{(bq^k)^{m-k}}{(bxq^{k};q)_{\infty}}\\
    &=\sum_{k=0}^{m}\qbinom{m}{k}_{q}\frac{a^k}{(ax;q)_{\infty}}\frac{b^{m-k}}{(bxq^{k};q)_{\infty}}\\
    &=\frac{1}{(ax,bx;q)_{\infty}}\sum_{k=0}^{m}\qbinom{m}{k}_{q}(bx;q)_{k}a^kb^{m-k}\\
    &=\frac{\Phi_{m}^{(bx)}(a,b)}{(ax,bx;q)_{\infty}},
\end{align*}
then
\begin{align*}
    &\g_{n}(dD_{q}|u)\left\{\frac{(cx;q)_{\infty}}{(ax,bx;q)_{\infty}}\right\}\\
    &\hspace{1cm}=\frac{1}{(ax,bx;q)_{\infty}}u^{\binom{n}{2}}\sum_{l=0}^{n}\qbinom{n}{l}u^{\binom{l}{2}}(q^{l}cx;q)_{\infty}(u^{1-n}d)^l\sum_{m=0}^{l}\qbinom{l}{m}_{q}\Phi_{m}^{(bx)}(a,b)c^{l-m}\\
    &\hspace{1cm}=\frac{(cx;q)_{\infty}}{(ax,bx;q)_{\infty}}u^{\binom{n}{2}}\sum_{l=0}^{n}\qbinom{n}{l}u^{\binom{l}{2}}\frac{(u^{1-n}d)^l}{(cx;q)_{l}}\sum_{m=0}^{l}\qbinom{l}{m}_{q}\Phi_{m}^{(bx)}(a,b)c^{l-m}\\
    &\hspace{1cm}=\frac{(cx;q)_{\infty}}{(ax,bx;q)_{\infty}}u^{\binom{n}{2}}\sum_{l=0}^{n}\sum_{m=0}^{l}\qbinom{n}{l}\qbinom{l}{m}_{q}u^{\binom{l}{2}}\frac{\Phi_{m}^{(bx)}(a,b)}{(cx;q)_{l}}(u^{1-n}d)^lc^{l-m}.
\end{align*}
The proof is reached.
\end{proof}

\begin{theorem}
    \begin{multline*}
        \E(dD_{q})\left\{\frac{(cx;q)_{\infty}}{(ax,bx;q)_{\infty}}\right\}\\
        =\frac{(cx;q)_{\infty}}{(ax,bx;q)_{\infty}}\lim_{n\rightarrow\infty}\sum_{m=0}^{\infty}q^{\binom{m}{2}}\frac{\Phi_{m}^{(bx)}(a,b)}{(q;q)_{m}(cx;q)_{m}}d^m{}_1\phi_{1}\left(\begin{array}{c}
         0\\
         cq^mx
    \end{array};q,-q^mcd\right).
    \end{multline*}
\end{theorem}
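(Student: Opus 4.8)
The plan is to specialize the preceding Theorem~\ref{theo_abc} to $u=q$ and then feed the result into the limit that produces $\E(dD_q)$. Recall the limit relation established just after the first definition, $\E(dD_q)=\lim_{n\to\infty}q^{-\binom{n}{2}}\s_n(q^{n-1}dD_q|q)$, together with the fact that at $u=q$ the two families coincide, $\s_n(bD_q|q)=\g_n(bD_q|q)$ (both equal $q^{\binom{n}{2}}\sum_k\qbinom{n}{k}_q q^{\binom{k}{2}}(q^{1-n}b)^kD_q^k$). Hence it suffices to evaluate $\g_n(q^{n-1}dD_q|q)$ on $(cx;q)_\infty/(ax,bx;q)_\infty$, normalize by $q^{-\binom{n}{2}}$, and send $n\to\infty$.

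First I would put $u=q$ in Theorem~\ref{theo_abc} and replace the operator argument $d$ by $q^{n-1}d$. The only $n$-sensitive piece inside the sum is the factor $(u^{1-n}d)^l$, which collapses to $(q^{1-n}\cdot q^{n-1}d)^l=d^l$, while the global prefactor $u^{\binom{n}{2}}=q^{\binom{n}{2}}$ is cancelled by the normalization $q^{-\binom{n}{2}}$. Taking $n\to\infty$ and using $\qbinom{n}{l}_q\to 1/(q;q)_l$ (valid for $|q|<1$), I arrive at
\begin{multline*}
\E(dD_q)\Big\{\tfrac{(cx;q)_\infty}{(ax,bx;q)_\infty}\Big\}\\
=\frac{(cx;q)_\infty}{(ax,bx;q)_\infty}\sum_{l=0}^\infty\sum_{m=0}^l\frac{1}{(q;q)_l}\qbinom{l}{m}_q q^{\binom{l}{2}}\frac{\Phi_m^{(bx)}(a,b)}{(cx;q)_l}\,d^l c^{l-m}.
\end{multline*}

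The main step is to convert this double sum into the claimed product form. I would interchange the summations and set $l=m+j$ with $j\ge 0$. Three elementary identities then separate the $m$- and $j$-dependence: the additive splitting $\binom{m+j}{2}=\binom{m}{2}+\binom{j}{2}+mj$ for the $q$-power; identity~\eqref{eqn_iden2} in the form $(cx;q)_{m+j}=(cx;q)_m(cq^mx;q)_j$ for the denominator; and $\qbinom{m+j}{m}_q/(q;q)_{m+j}=1/[(q;q)_m(q;q)_j]$ for the binomial. Factoring out the surviving $m$-block $q^{\binom{m}{2}}\Phi_m^{(bx)}(a,b)\,d^m/[(q;q)_m(cx;q)_m]$, the inner $j$-sum becomes $\sum_{j\ge0}q^{\binom{j}{2}}q^{mj}(cd)^j/[(q;q)_j(cq^mx;q)_j]$. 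Since $(0;q)_j=1$ and the two sign factors (one from the series definition of ${}_1\phi_1$, one from $z=-q^mcd$) cancel to leave $q^{mj}(cd)^j$, this inner sum is exactly ${}_1\phi_1\!\left(\begin{smallmatrix}0\\ cq^mx\end{smallmatrix};q,-q^mcd\right)$, and the stated formula follows.

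I expect the reorganization of the double sum---specifically recognizing the inner sum, after the shift $l=m+j$, as a ${}_1\phi_1$ with a vanishing numerator parameter---to be the only genuine obstacle; everything else is inherited from Theorem~\ref{theo_abc} and the previously verified limit relations. One cosmetic remark: the $\lim_{n\to\infty}$ displayed in front of the right-hand side acts on a summand with no $n$-dependence and is a harmless residue of the limiting procedure, so it may be dropped.
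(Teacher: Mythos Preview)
Your proposal is correct and follows essentially the same route as the paper: specialize Theorem~\ref{theo_abc} to $u=q$, pass to the limit defining $\E(dD_q)$, swap the order of summation with the shift $l=m+j$, and identify the inner $j$-sum as the ${}_1\phi_1$. Your handling of the limit (replacing $d$ by $q^{n-1}d$ so that $(u^{1-n}d)^l$ collapses to $d^l$ before cancelling $q^{\binom{n}{2}}$) is in fact more explicit than the paper's presentation, and your remark that the residual $\lim_{n\to\infty}$ on the right-hand side is vacuous is well taken.
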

\begin{proof}
From the above theorem with $u=q$,
\begin{align*}
    &\E(dD_{q})\left\{\frac{(cx;q)_{\infty}}{(ax,bx;q)_{\infty}}\right\}\\
    &\hspace{1cm}=\frac{(cx;q)_{\infty}}{(ax,bx;q)_{\infty}}\lim_{n\rightarrow\infty}\sum_{l=0}^{n}\sum_{m=0}^{l}\qbinom{n}{l}\qbinom{l}{m}_{q}q^{\binom{l}{2}}\frac{\Phi_{m}^{(bx)}(a,b)}{(cx;q)_{l}}d^lc^{l-m}\\
    &\hspace{1cm}=\frac{(cx;q)_{\infty}}{(ax,bx;q)_{\infty}}\lim_{n\rightarrow\infty}\sum_{l=0}^{\infty}\sum_{m=0}^{l}\frac{1}{(q;q)_{m}(q;q)_{l-m}}q^{\binom{l}{2}}\frac{\Phi_{m}^{(bx)}(a,b)}{(cx;q)_{l}}d^lc^{l-m}\\
    &\hspace{1cm}=\frac{(cx;q)_{\infty}}{(ax,bx;q)_{\infty}}\lim_{n\rightarrow\infty}\sum_{m=0}^{\infty}q^{\binom{m}{2}}\frac{\Phi_{m}^{(bx)}(a,b)}{(q;q)_{m}(cx;q)_{m}}d^m\sum_{l=0}^{\infty}\frac{q^{\binom{l}{2}}(dc)^{l}}{(q;q)_{l}(cq^mx;q)_{l}}\\
    &\hspace{1cm}=\frac{(cx;q)_{\infty}}{(ax,bx;q)_{\infty}}\lim_{n\rightarrow\infty}\sum_{m=0}^{\infty}q^{\binom{m}{2}}\frac{\Phi_{m}^{(bx)}(a,b)}{(q;q)_{m}(cx;q)_{m}}d^m{}_1\phi_{1}\left(\begin{array}{c}
         0\\
         cq^mx
    \end{array};q,-q^mcd\right).
\end{align*}
\end{proof}

\begin{theorem}
    \begin{multline*}
        \g_{n}(dD_{q}|u)\left\{\frac{1}{(ax,bx,cx;q)_{\infty}}\right\}\\     =\frac{1}{(ax,bx,cx;q)_{\infty}}u^{\binom{n}{2}}\sum_{k=0}^{n}\sum_{l=0}^{k}\qbinom{n}{k}_{q}\qbinom{k}{l}_{q}u^{\binom{k}{2}-nk}d^k\Phi_{l}^{(bx)}(a,b)(cx;q)_{l}c^{k-l}.
    \end{multline*}
\end{theorem}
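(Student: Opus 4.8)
The plan is to follow the scheme of Theorem \ref{theo_abc}: reduce the action of $\g_n(dD_q|u)$ to that of a single power $D_q^k$, expand that power through the $q$-Leibniz rule, and then reassemble the two summations. First I would normalise the operator by writing
\[
\g_n(dD_q|u) = u^{\binom{n}{2}}\sum_{k=0}^n \qbinom{n}{k}_q u^{\binom{k+1}{2}-nk}\,d^k D_q^k,
\]
which uses the identity $\binom{n-k}{2} = \binom{n}{2}-nk+\binom{k+1}{2}$ already exploited in the earlier proofs. This turns the problem into the computation of $D_q^k\{1/(ax,bx,cx;q)_\infty\}$ for each $k$.

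For that computation I would factor the kernel as $1/(ax,bx;q)_\infty$ times $1/(cx;q)_\infty$ and apply Eq.(\ref{eqn_leibniz}). The first factor is governed by the identity
\[
D_q^l\left\{\frac{1}{(ax,bx;q)_\infty}\right\} = \frac{\Phi_l^{(bx)}(a,b)}{(ax,bx;q)_\infty},
\]
which is precisely the auxiliary computation carried out inside the proof of Theorem \ref{theo_abc}. The second factor, once the Leibniz rule shifts its argument to $q^l x$, is elementary: $D_q^{k-l}\{1/(cq^l x;q)_\infty\} = (cq^l)^{k-l}/(cq^l x;q)_\infty$. I would then rewrite $(cq^l x;q)_\infty = (cx;q)_\infty/(cx;q)_l$ by Eq.(\ref{eqn_iden2}) to pull the global factor $1/(ax,bx,cx;q)_\infty$ out front, arriving at
\[
D_q^k\left\{\frac{1}{(ax,bx,cx;q)_\infty}\right\} = \frac{1}{(ax,bx,cx;q)_\infty}\sum_{l=0}^k \qbinom{k}{l}_q \Phi_l^{(bx)}(a,b)(cx;q)_l\,c^{k-l}.
\]

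Substituting this inner expansion back into the normalised operator and merging the outer index $k$ with the inner index $l$ then yields the stated double sum. I expect the main obstacle to be the bookkeeping of the powers of $q$: the prefactor produced by Eq.(\ref{eqn_leibniz}) must cancel against the factor $q^{l(k-l)}$ hidden inside $(cq^l)^{k-l}=c^{k-l}q^{l(k-l)}$, so that the inner sum is entirely free of residual powers of $q$ — exactly the cancellation that occurs in the preceding theorems, for instance in the proof of Theorem \ref{theo_abc}. The only remaining point to verify is the exponent of $u$: combining $u^{\binom{k+1}{2}-nk}$ with $d^k$ produces the coefficient $u^{\binom{k}{2}}(u^{1-n}d)^k$ attached to each $k$, in keeping with the normalisation used throughout the previous theorems, which should be identified with the factor appearing in the claimed double sum.
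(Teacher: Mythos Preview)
Your proposal is correct and follows essentially the same route as the paper's own proof: expand $\g_n$ in powers of $D_q^k$, split $D_q^k\{1/(ax,bx,cx;q)_\infty\}$ via the Leibniz rule into the product $1/(ax,bx;q)_\infty \cdot 1/(cx;q)_\infty$, invoke the identity $D_q^l\{1/(ax,bx;q)_\infty\}=\Phi_l^{(bx)}(a,b)/(ax,bx;q)_\infty$ from the proof of Theorem~\ref{theo_abc}, and let the Leibniz prefactor $q^{l(l-k)}$ cancel against the $q^{l(k-l)}$ coming from $(cq^l)^{k-l}$. Your observation at the end is also on target: the exponent $u^{\binom{k+1}{2}-nk}$ that the definition of $\g_n$ produces differs from the $u^{\binom{k}{2}-nk}$ printed in the statement by a factor $u^k$, so either the stated formula should carry $(ud)^k$ in place of $d^k$, or the exponent of $u$ should read $\binom{k+1}{2}-nk$; the paper's own proof simply writes the latter exponent as $\binom{k}{2}-nk$ without comment.
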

\begin{proof}
From Eq.(\ref{eqn_leibniz}), we get
\begin{align*}
    &\g_{n}(dD_{q}|u)\left\{\frac{1}{(ax,bx,cx)_{\infty}}\right\}\\
    &\hspace{1cm}=u^{\binom{n}{2}}\sum_{k=0}^{n}\qbinom{n}{k}_{q}u^{\binom{k}{2}-nk}d^kD_{q}\left\{\frac{1}{(ax,bx,cx;q)_{\infty}}\right\}\\
    &\hspace{1cm}=u^{\binom{n}{2}}\sum_{k=0}^{n}\qbinom{n}{k}_{q}u^{\binom{k}{2}-nk}d^k\sum_{l=0}^{k}\qbinom{k}{l}_{q}q^{l(l-k)}D_{q}^{l}\left\{\frac{1}{(ax,bx;q)_{\infty}}\right\}D_{q}^{k-l}\left\{\frac{1}{(cq^lx;q)_{\infty}}\right\}\\
    &\hspace{1cm}=u^{\binom{n}{2}}\sum_{k=0}^{n}\qbinom{n}{k}_{q}u^{\binom{k}{2}-nk}d^k\sum_{l=0}^{k}\qbinom{k}{l}_{q}\frac{\Phi_{l}^{(bx)}(a,b)}{(ax,bx;q)_{\infty}}\frac{c^{k-l}}{(cq^lx;q)_{\infty}}\\
    &\hspace{1cm}=\frac{1}{(ax,bx,cx;q)_{\infty}}u^{\binom{n}{2}}\sum_{k=0}^{n}\qbinom{n}{k}_{q}u^{\binom{k}{2}-nk}d^k\sum_{l=0}^{k}\qbinom{k}{l}_{q}\Phi_{l}^{(bx)}(a,b)(cx;q)_{l}c^{k-l}.
\end{align*}
The proof is reached.
\end{proof}

\begin{theorem}
    \begin{multline*}
        \E(dD_{q})\left\{\frac{1}{(ax,bx,cx;q)_{\infty}}\right\}\\     =\frac{(cd;q)_{\infty}}{(ax,bx,cx;q)_{\infty}}\sum_{k=0}^{\infty}q^{\binom{k}{2}}\frac{(bx;q)_{k}(cx;q)_{k}}{(q;q)_{k}(cd;q)_{k}}(ad)^k{}_1\phi_{1}\left(\begin{array}{c}
         q^kcx\\
         q^kcd
    \end{array};q,-q^kbd\right).
    \end{multline*}
\end{theorem}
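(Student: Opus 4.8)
The plan is to derive the identity from the previous theorem exactly as the earlier $\E$-operator evaluations were obtained from their $\g_n$-counterparts: specialize at $u=q$ and pass to the limit via $\E(dD_q)=\lim_{n\to\infty}q^{-\binom{n}{2}}\s_n(q^{n-1}dD_q|q)$. After the substitution $d\mapsto q^{n-1}d$ the prefactor $q^{-\binom{n}{2}}$ cancels the leading $u^{\binom{n}{2}}=q^{\binom{n}{2}}$, the surviving $q$-powers combine to $q^{\binom{k}{2}}$, and $\lim_{n\to\infty}\qbinom{n}{k}_q=1/(q;q)_k$ turns the two $q$-binomials into reciprocal factorials. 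Equivalently, and more directly, I would expand $\E(dD_q)=\sum_{k\ge0}q^{\binom{k}{2}}d^kD_q^k/(q;q)_k$ and reuse the iterated Leibniz evaluation of $D_q^k\{1/(ax,bx,cx;q)_\infty\}$ already carried out in the preceding proof. Either route yields the intermediate
\begin{equation*}
\E(dD_q)\left\{\frac{1}{(ax,bx,cx;q)_\infty}\right\}=\frac{1}{(ax,bx,cx;q)_\infty}\sum_{k=0}^\infty\sum_{l=0}^k\frac{q^{\binom{k}{2}}}{(q;q)_l(q;q)_{k-l}}\,d^k\,\Phi_l^{(bx)}(a,b)(cx;q)_l\,c^{k-l}.
\end{equation*}

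Next I would collapse the outer index. Holding $l$ fixed and writing $k=l+j$, the splitting $\binom{l+j}{2}=\binom{l}{2}+\binom{j}{2}+lj$ together with $(q;q)_{k-l}=(q;q)_j$, $d^k=d^ld^j$ and $c^{k-l}=c^j$ detaches the $j$-sum as $\sum_{j\ge0}q^{\binom{j}{2}}(q^lcd)^j/(q;q)_j=\E_q(q^lcd)=(-q^lcd;q)_\infty$. By (\ref{eqn_iden1}) this equals $(-cd;q)_\infty/(-cd;q)_l$, so the double sum reduces to the single sum
\begin{equation*}
\frac{(-cd;q)_\infty}{(ax,bx,cx;q)_\infty}\sum_{l=0}^\infty\frac{q^{\binom{l}{2}}(cx;q)_l}{(q;q)_l(-cd;q)_l}\,d^l\,\Phi_l^{(bx)}(a,b).
\end{equation*}

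Finally I would expand $\Phi_l^{(bx)}(a,b)=\sum_{i=0}^l\qbinom{l}{i}_q(bx;q)_ia^ib^{l-i}$ and interchange the order of summation, taking $i$ as the outer index and setting $l=i+p$. The splitting $\binom{i+p}{2}=\binom{i}{2}+\binom{p}{2}+ip$, together with the factorizations $(cx;q)_{i+p}=(cx;q)_i(cxq^i;q)_p$ and $(-cd;q)_{i+p}=(-cd;q)_i(-cdq^i;q)_p$ from (\ref{eqn_iden2}), separate the expression into an $i$-sum times a $p$-sum. The inner $p$-sum is precisely ${}_1\phi_1\!\left(\begin{array}{c}q^icx\\ -q^icd\end{array};q,-q^ibd\right)$, and collecting the $i$-dependent factors $q^{\binom{i}{2}}(bx;q)_i(cx;q)_i(ad)^i/[(q;q)_i(-cd;q)_i]$ gives the claimed closed form.

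The main obstacle is the bookkeeping in these two successive reorganizations: one must split each quadratic exponent $q^{\binom{k}{2}}$ correctly across the freshly introduced indices and apply the Pochhammer factorizations (\ref{eqn_iden1})--(\ref{eqn_iden2}) repeatedly so that the $\E_q$-type sum and the ${}_1\phi_1$-type sum both emerge cleanly. The single delicate point is the sign generated by $\E_q(z)=(-z;q)_\infty$: it is this sign that governs whether the product prefactor, the factorial $(\cdot;q)_l$ in the denominator, and the lower ${}_1\phi_1$ parameter carry $cd$ or $-cd$, and it must be propagated consistently through both collapses to land on the stated formula.
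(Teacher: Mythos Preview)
Your approach is exactly the paper's: specialize the preceding $\g_{n}$-identity at $u=q$ (equivalently, expand $\E(dD_{q})$ directly), let $n\to\infty$ to obtain the same double sum, collapse one index via the $\E_{q}$ product formula, then expand $\Phi_{l}^{(bx)}(a,b)$ and regroup the remaining sum as a ${}_{1}\phi_{1}$. The two arguments are step-for-step identical.

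Your caution about the sign is well placed and in fact locates a slip in the paper. The inner sum is
\[
\sum_{j\ge0}\frac{q^{\binom{j}{2}}(q^{l}cd)^{j}}{(q;q)_{j}}=\E_{q}(q^{l}cd)=(-q^{l}cd;q)_{\infty},
\]
not $(q^{l}cd;q)_{\infty}$ as the paper writes. Propagating this consistently gives precisely your prefactor $(-cd;q)_{\infty}$, denominator $(-cd;q)_{l}$, and lower ${}_{1}\phi_{1}$ parameter $-q^{k}cd$, rather than the unsigned $cd$ that appears in the stated theorem. So the discrepancy you flag between your final expression and the displayed formula is the paper's error, not yours; your derivation is the correct one.
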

\begin{proof}
    \begin{align*}
        &\E(dD_{q})\left\{\frac{1}{(ax,bx,cx;q)_{\infty}}\right\}\\     &\hspace{1cm}=\frac{1}{(ax,bx,cx;q)_{\infty}}\lim_{n\rightarrow\infty}\sum_{k=0}^{n}\sum_{l=0}^{k}\qbinom{n}{k}_{q}\qbinom{k}{l}_{q}q^{\binom{k}{2}}d^k\Phi_{l}^{(bx)}(a,b)(cx;q)_{l}c^{k-l}\\
        &\hspace{1cm}=\frac{1}{(ax,bx,cx;q)_{\infty}}\sum_{k=0}^{\infty}\sum_{l=0}^{k}\frac{1}{(q;q)_{l}(q;q)_{k-l}}q^{\binom{k}{2}}d^k\Phi_{l}^{(bx)}(a,b)(cx;q)_{l}c^{k-l}\\
        &\hspace{1cm}=\frac{1}{(ax,bx,cx;q)_{\infty}}\sum_{l=0}^{\infty}\frac{\Phi_{l}^{(bx)}(a,b)(cx;q)_{l}}{(q;q)_{l}}\sum_{k=l}^{\infty}\frac{1}{(q;q)_{k-l}}q^{\binom{k}{2}}d^kc^{k-l}\\
        &\hspace{1cm}=\frac{1}{(ax,bx,cx;q)_{\infty}}\sum_{l=0}^{\infty}q^{\binom{l}{2}}\frac{\Phi_{l}^{(bx)}(a,b)(cx;q)_{l}}{(q;q)_{l}}d^l\sum_{k=0}^{\infty}\frac{1}{(q;q)_{k}}q^{\binom{k}{2}}(q^ldc)^{k}\\
        &\hspace{1cm}=\frac{1}{(ax,bx,cx;q)_{\infty}}\sum_{l=0}^{\infty}q^{\binom{l}{2}}\frac{\Phi_{l}^{(bx)}(a,b)(cx;q)_{l}}{(q;q)_{l}}d^l(q^lcd;q)_{\infty}\\
        &\hspace{1cm}=\frac{(cd;q)_{\infty}}{(ax,bx,cx;q)_{\infty}}\sum_{l=0}^{\infty}q^{\binom{l}{2}}\frac{(cx;q)_{l}}{(q;q)_{l}(cd;q)_{l}}d^l\sum_{k=0}^{l}\qbinom{l}{k}_{q}(bx;q)_{k}a^kb^{l-k}\\
        &\hspace{1cm}=\frac{(cd;q)_{\infty}}{(ax,bx,cx;q)_{\infty}}\sum_{k=0}^{\infty}q^{\binom{k}{2}}\frac{(bx;q)_{k}(cx;q)_{k}}{(q;q)_{k}(cd;q)_{k}}(ad)^k\sum_{l=0}^{\infty}q^{\binom{l}{2}}
        \frac{(q^kcx;q)_{l}}{(q;q)_{l}(q^kcd;q)_{l}}(q^kbd)^{l}\\
        &\hspace{1cm}=\frac{(cd;q)_{\infty}}{(ax,bx,cx;q)_{\infty}}\sum_{k=0}^{\infty}q^{\binom{k}{2}}\frac{(bx;q)_{k}(cx;q)_{k}}{(q;q)_{k}(cd;q)_{k}}(ad)^k{}_1\phi_{1}\left(\begin{array}{c}
         q^kcx\\
         q^kcd
    \end{array};q,-q^kbd\right)
    \end{align*}
as claimed.    
\end{proof}

\section{Deformed generating function}

\begin{theorem}
    \begin{equation*}
        \sum_{m=0}^{\infty}\Psi_{m}^{(q^{-n})}(b,x|uq^{-1})y^m=\frac{1}{(1-xy)(q;q)_{n}}\Psi_{n}^{(0,q,qxy)}(by,1|u).
    \end{equation*}
\end{theorem}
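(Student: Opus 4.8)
The cleanest route I would take is operator-theoretic, since it reduces the whole generating function to a single application of $\g_n$. By the first theorem of this section (the evaluation of $\g_n(bD_q|u)\{x^m\}$), replacing $b$ by $b/u$ gives $\Psi_m^{(q^{-n})}(b,x|uq^{-1})=u^{-\binom{n}{2}}\g_n\!\left((b/u)D_q\mid u\right)\{x^m\}$. As $\g_n$ is a linear operator acting in the variable $x$, the plan is to interchange it with the sum over $m$:
\begin{equation*}
\sum_{m=0}^\infty\Psi_m^{(q^{-n})}(b,x|uq^{-1})y^m=u^{-\binom{n}{2}}\g_n\!\left((b/u)D_q\mid u\right)\left\{\sum_{m=0}^\infty x^my^m\right\}=u^{-\binom{n}{2}}\g_n\!\left((b/u)D_q\mid u\right)\left\{\frac{1}{1-xy}\right\},
\end{equation*}
which is legitimate for $|xy|<1$.

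Then, using the series form $\g_n(BD_q|u)=u^{\binom{n}{2}}\sum_{k=0}^n\qbinom{n}{k}_q u^{\binom{k+1}{2}-nk}B^kD_q^k$ with $B=b/u$, I would only need the single evaluation $D_q^k\{(1-xy)^{-1}\}=(q;q)_k\,y^k/(xy;q)_{k+1}$. This follows at once by applying $D_q^k\{x^m\}=(q;q)_m x^{m-k}/(q;q)_{m-k}$ termwise to $1/(1-xy)=\sum_m y^m x^m$ and resumming with the $q$-binomial theorem in the form $\sum_{j\ge0}\qbinom{k+j}{j}_q z^j=1/(z;q)_{k+1}$. Substituting this and using the factorization $(xy;q)_{k+1}=(1-xy)(qxy;q)_k$ splits off the prefactor $(1-xy)^{-1}$ and leaves a Pochhammer $(qxy;q)_k$ in the denominator, which is exactly the shape demanded by the target polynomial $\Psi_n^{(0,q,qxy)}(by,1|u)$.

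What remains is a single sum over $0\le k\le n$, which I would identify with $(q;q)_n^{-1}\Psi_n^{(0,q,qxy)}(by,1|u)$ using $(0;q)_k=1$ and $\qbinom{n}{k}_q(q;q)_k=(q;q)_n/(q;q)_{n-k}$ to reconstruct the numerator parameters $(0,q;q)_k$ and the normalization. The step I expect to be the crux is the exponent bookkeeping for the deformation variable: one must combine the weight $u^{\binom{k+1}{2}-nk}$ coming from $\g_n$, the factor $u^{-k}$ produced by $B^k=(b/u)^k$, and the global $u^{-\binom{n}{2}}$, and verify that the total reproduces precisely the $u^{\binom{k+1}{2}-nk}$ weight together with the $(q;q)_n^{-1}$ normalization on the right-hand side. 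A purely series-theoretic alternative — expand $\Psi_m^{(q^{-n})}$, apply the terminating evaluation $(q^{-n};q)_k=(-1)^kq^{\binom{k}{2}-nk}(q;q)_n/(q;q)_{n-k}$ coming from identity (\ref{eqn_iden3}) to truncate the $k$-sum at $n$, then swap the order of summation and sum the inner geometric series in $m$ — arrives at the same point, but it introduces a shifted argument of the form $(q/u)^kxy$ in the inner sum that must then be reconciled with the clean $qxy$ above; for that reason I would prefer the operator route and treat the $u$-power verification as the single place where real care is required.
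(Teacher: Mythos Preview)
Your approach is essentially the same as the paper's: represent $\Psi_m^{(q^{-n})}(b,x|uq^{-1})$ via $\g_n$ acting on $x^m$, pull $\g_n$ through the geometric sum $\sum_m (xy)^m=1/(1-xy)$, evaluate $D_q^k\{1/(1-xy)\}=(q;q)_k\,y^k/(xy;q)_{k+1}$, and read off the definition of $\Psi_n^{(0,q,qxy)}(by,1|u)$. The only visible difference is that the paper writes the operator as $\g_n(bD_q|u)$ directly rather than tracking the $b/u$ substitution you flag; your caution about the resulting $u$-exponent bookkeeping is well placed, but the route itself is identical.
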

\begin{proof}
We have
    \begin{align*}
        \sum_{m=0}^{\infty}\Psi_{m}^{(q^{-n})}(b,x|uq^{-1})y^m&=u^{-\binom{n}{2}}\sum_{m=0}^{\infty}g_{n}(bD_{q}|u)\{x^m\}y^m\\
        &=u^{-\binom{n}{2}}\g_{n}(bD_{q}|u)\left\{\sum_{m=0}^{\infty}(xy)^m\right\}\\
        &=\sum_{k=0}^{n}\qbinom{n}{k}_{q}u^{\binom{k+1}{2}-nk}b^kD_{q}^k\left\{\frac{1}{1-xy}\right\}\\
        &=\frac{1}{(1-xy)(q;q)_{n}}\sum_{k=0}^{n}\qbinom{n}{k}_{q}u^{\binom{k+1}{2}-nk}\frac{(q;q)_{k}}{(qxy;q)_{k}}(by)^k\\
        &=\frac{1}{(1-xy)(q;q)_{n}}\Psi_{n}^{(0,q,qxy)}(by,1|u)
    \end{align*}
as claimed.
\end{proof}

\begin{corollary}
    \begin{align*}
        \sum_{m=0}^{\infty}\Psi_{m}^{(q^{-n})}(b,x|qq^{-1})y^m
        &=\frac{1}{(1-xy)(q;q)_{n}}\psi_{n}^{(0,q,qxy)}(-by,1|q).\\
        \sum_{m=0}^{\infty}\Psi_{m}^{(q^{-n})}(b,x|q^{-1})y^m
        &=\frac{1}{(1-xy)(q;q)_{n}}\phi_{n}^{(0,q,qxy)}(by,1|q).
    \end{align*}
\end{corollary}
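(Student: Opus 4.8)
The plan is to obtain both identities as immediate specializations of the preceding theorem, taking $u=q$ for the first line and $u=1$ for the second, and then translating the deformed generalized Al-Salam-Carlitz polynomial $\Psi_n^{(0,q,qxy)}$ on the right-hand side into the classical polynomials $\psi_n$ and $\phi_n$ via their definitions.

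First I would treat the case $u=q$. Substituting $u=q$ into the theorem directly produces the left-hand side $\sum_m \Psi_m^{(q^{-n})}(b,x|qq^{-1})y^m$ and the right-hand side $\frac{1}{(1-xy)(q;q)_n}\Psi_n^{(0,q,qxy)}(by,1|q)$, so the only task is to identify $\Psi_n^{(0,q,qxy)}(by,1|q)$ with $\psi_n^{(0,q,qxy)}(-by,1|q)$. Comparing the summand of the deformed polynomial at $u=q$, which carries the weight $q^{\binom{k+1}{2}-nk}$, with the summand of $\psi_n$, which carries $(-1)^k q^{\binom{k+1}{2}-nk}$, the two differ only by the factor $(-1)^k$; evaluating $\psi_n$ at the argument $-by$ in place of $by$ supplies a compensating $(-1)^k$ from $(-by)^k=(-1)^k(by)^k$, so the signs cancel termwise and the polynomials coincide. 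This gives the first identity.

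Next I would handle the case $u=1$. Substituting $u=1$ into the theorem yields the left-hand side $\sum_m \Psi_m^{(q^{-n})}(b,x|q^{-1})y^m$ together with the right-hand side $\frac{1}{(1-xy)(q;q)_n}\Psi_n^{(0,q,qxy)}(by,1|1)$. Because the deformation weight $u^{\binom{k+1}{2}-nk}$ becomes identically $1$, the summand of $\Psi_n^{(0,q,qxy)}(by,1|1)$ collapses exactly onto that of $\phi_n^{(0,q,qxy)}(by,1|q)$, whence the two polynomials are equal and the second identity follows.

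There is essentially no analytic obstacle here: both claims are purely formal consequences of the theorem combined with the bookkeeping of how the deformation parameter $u$ specializes $\Psi_n^{(a,b,c)}$ to $\psi_n^{(a,b,c)}$ at $u=q$ (absorbing the sign through the sign change of the argument) and to $\phi_n^{(a,b,c)}$ at $u=1$. The only step requiring genuine care is the sign reconciliation in the $u=q$ case, namely verifying that the $(-1)^k$ intrinsic to $\psi_n$ is cancelled precisely by evaluating at $-by$.
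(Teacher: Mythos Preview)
Your proposal is correct and is precisely the intended derivation: the paper states the corollary without proof, immediately after the theorem, so the implied argument is exactly the specialization $u=q$ and $u=1$ together with the termwise identification of $\Psi_n^{(0,q,qxy)}(by,1|u)$ with $\psi_n^{(0,q,qxy)}(-by,1|q)$ and $\phi_n^{(0,q,qxy)}(by,1|q)$ respectively. Your sign-reconciliation step for $u=q$ is the only nontrivial check, and you handled it correctly.
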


\begin{theorem}
    \begin{equation*}
        \sum_{m=0}^{\infty}q^{-\binom{m}{2}}\Psi_{m}^{(q^{-n})}(b,x|uq^{-1})y^m=\sum_{k=0}^{n}\qbinom{n}{k}_{q}u^{\binom{k+1}{2}-nk}q^{-\binom{k}{2}}(q;q)_{k}(1\ominus_{1,1}qx)_{q}^{(-k-1)}(by)^k.
    \end{equation*}
\end{theorem}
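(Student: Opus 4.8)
The plan is to mirror the proof of the preceding theorem, with the geometric series $\sum_{m}(xy)^{m}$ replaced by the partial-theta-type series forced by the extra weight $q^{-\binom{m}{2}}$. First I would invoke the first theorem to write $\Psi_{m}^{(q^{-n})}(b,x|uq^{-1})=u^{-\binom{n}{2}}\g_{n}(bD_{q}|u)\{x^{m}\}$, and then use the linearity of the operator $\g_{n}(bD_{q}|u)$, which acts in the variable $x$ alone, to move it outside the summation over $m$; the scalars $q^{-\binom{m}{2}}$ and $y^{m}$ are inert under the operator. This reduces the left-hand side to
\[
u^{-\binom{n}{2}}\g_{n}(bD_{q}|u)\left\{\sum_{m=0}^{\infty}q^{-\binom{m}{2}}(xy)^{m}\right\},
\]
where the operator acts on $x$ and $y$ is treated as a constant.

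Next I would expand the operator through the identity $u^{-\binom{n}{2}}\g_{n}(bD_{q}|u)=\sum_{k=0}^{n}\qbinom{n}{k}_{q}u^{\binom{k+1}{2}-nk}b^{k}D_{q}^{k}$ used in the first theorem, and differentiate the series term by term via $D_{q}^{k}\{x^{m}\}=\frac{(q;q)_{m}}{(q;q)_{m-k}}x^{m-k}$. After the shift $m=j+k$ and the splitting $\binom{j+k}{2}=\binom{j}{2}+\binom{k}{2}+jk$, the factor $q^{-\binom{k}{2}}(q;q)_{k}y^{k}$ pulls out of each $k$-term, and using $\frac{(q;q)_{j+k}}{(q;q)_{j}}=(q;q)_{k}\qbinom{j+k}{k}_{q}$ one is left with
\[
D_{q}^{k}\left\{\sum_{m=0}^{\infty}q^{-\binom{m}{2}}(xy)^{m}\right\}=q^{-\binom{k}{2}}(q;q)_{k}\,y^{k}\sum_{j=0}^{\infty}q^{-\binom{j}{2}-jk}\qbinom{j+k}{k}_{q}(xy)^{j}.
\]

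Finally I would identify the inner series $\sum_{j\ge0}q^{-\binom{j}{2}-jk}\qbinom{j+k}{k}_{q}(xy)^{j}$ with the generalized power $(1\ominus_{1,1}qx)_{q}^{(-k-1)}$, so that, after collecting $b^{k}y^{k}=(by)^{k}$, each term becomes $\qbinom{n}{k}_{q}u^{\binom{k+1}{2}-nk}q^{-\binom{k}{2}}(q;q)_{k}(1\ominus_{1,1}qx)_{q}^{(-k-1)}(by)^{k}$, and summing over $k$ from $0$ to $n$ reproduces exactly the claimed right-hand side.

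I expect the main obstacle to be precisely this last identification. Unlike the geometric series of the previous theorem, where $\sum_{j}\qbinom{j+k}{k}_{q}z^{j}=1/(z;q)_{k+1}$ furnishes a clean closed product, here the partial-theta weight $q^{-\binom{j}{2}}$ forces one to recognize the series through the $(s,t)$-deformed power notation $(1\ominus_{1,1}qx)_{q}^{(-k-1)}$ rather than an elementary factor. A secondary point needing care is that $\sum_{m}q^{-\binom{m}{2}}(xy)^{m}$ is only a \emph{formal} series, since its coefficients grow; all of the manipulations above should therefore be read as identities of formal power series, in which the term-by-term application of $D_{q}^{k}$ and the interchange of the two summations are legitimate.
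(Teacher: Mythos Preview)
Your proposal is correct and follows essentially the same route as the paper. The paper simply names the inner series as the partial theta function $\Theta_{0}(xy,q^{-1})$ and then quotes the formula $D_{q}^{k}\{\Theta_{0}(xy,q^{-1})\}=q^{-\binom{k}{2}}(q;q)_{k}(1\ominus_{1,1}qx)_{q}^{(-k-1)}y^{k}$ from the author's earlier work, whereas you compute this derivative by hand via termwise differentiation and the index shift $m=j+k$; the identification of the residual $j$-series with the generalized power $(1\ominus_{1,1}qx)_{q}^{(-k-1)}$ is exactly the cited formula unpacked.
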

\begin{proof}
We have
    \begin{align*}
        \sum_{m=0}^{\infty}q^{-\binom{m}{2}}\Psi_{m}^{(q^{-n})}(b,x|uq^{-1})y^m&=u^{-\binom{n}{2}}\g_{n}(bD_{q}|uq^{-1})\left\{\sum_{m=0}^{\infty}q^{-\binom{m}{2}}(xy)^m\right\}\\
        &=u^{-\binom{n}{2}}\g_{n}(bD_{q}|uq^{-1})\left\{\Theta_{0}(xy,q^{-1})\right\}\\
        &=\sum_{k=0}^{n}\qbinom{n}{k}_{q}u^{\binom{k+1}{2}-nk}b^kD_{q}^k\{\Theta_{0}(xy,q^{-1})\}\\
        &=\sum_{k=0}^{n}\qbinom{n}{k}_{q}u^{\binom{k+1}{2}-nk}q^{-\binom{k}{2}}(q;q)_{k}(1\ominus_{1,1}qx)_{q}^{(-k-1)}(by)^k
    \end{align*}
as claimed.    
\end{proof}

\begin{theorem}
    \begin{equation*}
        \sum_{m=0}^{\infty}q^{\binom{m}{2}}\Psi_{m}^{(q^{-n})}(b,x|uq^{-1})y^m=\sum_{k=0}^{n}\qbinom{n}{k}_{q}u^{\binom{k+1}{2}-nk}q^{\binom{k}{2}}(q;q)_{k}(1\ominus_{1,q^2}q^{2k+1}x)_{q}^{(-k-1)}(by)^k.
    \end{equation*}
\end{theorem}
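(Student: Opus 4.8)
The plan is to mirror the proof of the immediately preceding theorem, replacing the weight $q^{-\binom{m}{2}}$ by $q^{\binom{m}{2}}$ throughout and then tracking how this single change propagates into the deformation parameters. First I would invoke the operator representation $\Psi_{m}^{(q^{-n})}(b,x|uq^{-1})=u^{-\binom{n}{2}}\g_{n}(bD_{q}|u)\{x^{m}\}$ that is used in the preceding generating-function theorems, together with the fact that $y^{m}$ is a scalar with respect to $D_{q}$, so that $\g_{n}(bD_{q}|u)\{x^{m}\}\,y^{m}=\g_{n}(bD_{q}|u)\{(xy)^{m}\}$. Summing over $m$ and pulling the operator outside the sum by linearity reduces the left-hand side to
\[
u^{-\binom{n}{2}}\g_{n}(bD_{q}|u)\Bigl\{\sum_{m=0}^{\infty}q^{\binom{m}{2}}(xy)^{m}\Bigr\},
\]
that is, $\g_{n}$ applied to the theta-type series $\sum_{m}q^{\binom{m}{2}}(xy)^{m}$, which here plays the role that $\Theta_{0}(xy,q^{-1})$ played in the previous theorem.

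Next I would expand the operator. Using the elementary identity $\binom{n-k}{2}=\binom{n}{2}-nk+\binom{k+1}{2}$ one gets $u^{-\binom{n}{2}}\g_{n}(bD_{q}|u)=\sum_{k=0}^{n}\qbinom{n}{k}_{q}u^{\binom{k+1}{2}-nk}b^{k}D_{q}^{k}$, so everything collapses to evaluating $D_{q}^{k}$ of the theta series. Since $D_{q}^{k}\{(xy)^{m}\}=y^{m}\frac{(q;q)_{m}}{(q;q)_{m-k}}x^{m-k}$, reindexing by $j=m-k$ and splitting $\binom{j+k}{2}=\binom{j}{2}+jk+\binom{k}{2}$ together with $(q;q)_{j+k}=(q;q)_{k}(q^{k+1};q)_{j}$ gives
\[
D_{q}^{k}\Bigl\{\sum_{m=0}^{\infty}q^{\binom{m}{2}}(xy)^{m}\Bigr\}=y^{k}q^{\binom{k}{2}}(q;q)_{k}\sum_{j=0}^{\infty}q^{\binom{j}{2}}\frac{(q^{k+1};q)_{j}}{(q;q)_{j}}(q^{k}xy)^{j}.
\]
The emerging prefactor $q^{\binom{k}{2}}(q;q)_{k}$ is precisely what appears in the claimed formula, and combining the operator's $b^{k}$ with this $y^{k}$ recovers the factor $(by)^{k}$ of the right-hand side.

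The main obstacle is the final identification: recognizing the residual series $\sum_{j}q^{\binom{j}{2}}\frac{(q^{k+1};q)_{j}}{(q;q)_{j}}(q^{k}xy)^{j}$ as the deformed negative power $(1\ominus_{1,q^{2}}q^{2k+1}x)_{q}^{(-k-1)}$ in the $(s,t)$-power notation of the companion work. This is exactly where the two theorems diverge: in the $q^{-\binom{m}{2}}$ case the quadratic exponent enters with a minus sign and yields the parameters $(s,t)=(1,1)$ with argument $qx$, whereas here the $+\binom{j}{2}$ weighting, together with the factor $q^{jk}$ of base $q^{k}$ and the numerator base $q^{k+1}$, combine to produce the base $q^{2}$ in the subscript and the shifted argument $q^{2k+1}x$. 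I would confirm this by matching the defining series of $(1\ominus_{1,q^{2}}q^{2k+1}x)_{q}^{(-k-1)}$ term by term against the displayed $j$-sum; once the parameter bookkeeping is verified, substituting back into the operator expansion $\sum_{k=0}^{n}\qbinom{n}{k}_{q}u^{\binom{k+1}{2}-nk}b^{k}D_{q}^{k}$ produces the stated right-hand side and completes the proof.
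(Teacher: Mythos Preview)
Your proposal is correct and follows the same route as the paper: rewrite the left-hand side as $u^{-\binom{n}{2}}\g_{n}(bD_{q}|u)$ acting on the partial theta series $\Theta_{0}(xy,q)=\sum_{m\ge0}q^{\binom{m}{2}}(xy)^{m}$, expand the operator, and then evaluate $D_{q}^{k}\{\Theta_{0}(xy,q)\}$. The only difference is that the paper invokes the closed form $D_{q}^{k}\{\Theta_{0}(xy,q)\}=q^{\binom{k}{2}}(q;q)_{k}(1\ominus_{1,q^{2}}q^{2k+1}x)_{q}^{(-k-1)}y^{k}$ directly (it comes from the companion work on $(s,t)$-derivatives of partial theta functions), whereas you rederive it by reindexing and then match parameters; structurally the two arguments are the same.
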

\begin{proof}
We have
    \begin{align*}
        \sum_{m=0}^{\infty}q^{\binom{m}{2}}\Psi_{m}^{(q^{-n})}(b,x|uq^{-1})y^m&=u^{-\binom{n}{2}}\g_{n}(bD_{q}|uq^{-1})\left\{\sum_{m=0}^{\infty}q^{\binom{m}{2}}(xy)^m\right\}\\
        &=u^{-\binom{n}{2}}\g_{n}(bD_{q}|uq^{-1})\left\{\Theta_{0}(xy,q)\right\}\\
        &=\sum_{k=0}^{n}\qbinom{n}{k}_{q}u^{\binom{k+1}{2}-nk}b^kD_{q}^k\{\Theta_{0}(xy,q)\}\\
        &=\sum_{k=0}^{n}\qbinom{n}{k}_{q}u^{\binom{k+1}{2}-nk}q^{\binom{k}{2}}(q;q)_{k}(1\ominus_{1,q^2}q^{2k+1}x)_{q}^{(-k-1)}(by)^k
    \end{align*}
as claimed.
\end{proof}

\begin{theorem}
    \begin{equation*}
        \sum_{m=0}^{\infty}\Psi_{m}^{(q^{-n})}(b,x|uq^{-1})\frac{y^m}{(q;q)_{m}}=\frac{(1\oplus_{1,u}u^{1-n}by)_{q}^{(n)}}{(xy;q)_{\infty}}.
    \end{equation*}
\end{theorem}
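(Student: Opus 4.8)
The plan is to reuse the operator-based strategy running through Section 3: convert the generating sum into the action of $\g_n(bD_q|u)$ on a single closed-form kernel, then invoke the evaluation of that operator on a $q$-exponential already established in the second theorem. Concretely, I would start from the operator representation of the deformed homogeneous Al-Salam-Carlitz polynomials used throughout this section, $\Psi_m^{(q^{-n})}(b,x|uq^{-1}) = u^{-\binom{n}{2}}\g_n(bD_q|u)\{x^m\}$. Substituting this into the left-hand side and pulling $\g_n(bD_q|u)$ --- an operator in the variable $x$ only --- outside the sum over $m$ yields
\begin{equation*}
\sum_{m=0}^{\infty}\Psi_{m}^{(q^{-n})}(b,x|uq^{-1})\frac{y^m}{(q;q)_{m}} = u^{-\binom{n}{2}}\g_n(bD_q|u)\left\{\sum_{m=0}^{\infty}\frac{(xy)^m}{(q;q)_m}\right\},
\end{equation*}
after combining $x^m y^m = (xy)^m$ and treating the scalars $y^m/(q;q)_m$ as constants for $D_q$.

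Next I would recognize the inner series as the $q$-exponential from the introduction, $\sum_{m\ge0}(xy)^m/(q;q)_m = \e_q(xy) = 1/(xy;q)_\infty$, which converges for $|xy|<1$. This reduces everything to a single operator evaluation already performed in the second theorem: taking $\g_n(bD_q|u)\{1/(ax;q)_\infty\} = \frac{u^{\binom{n}{2}}}{(ax;q)_\infty}(1\oplus_{1,u}u^{1-n}ab)_q^{(n)}$ with $a=y$. The resulting factor $u^{\binom{n}{2}}$ cancels the prefactor $u^{-\binom{n}{2}}$, and the argument $u^{1-n}ab$ becomes $u^{1-n}by$, leaving precisely $(1\oplus_{1,u}u^{1-n}by)_q^{(n)}/(xy;q)_\infty$, as claimed.

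Given the earlier results, no step is genuinely difficult; the whole content is the reduction to the $q$-exponential kernel followed by the substitution $a=y$. The one point deserving care --- and the closest thing to an obstacle --- is justifying the interchange of $\g_n(bD_q|u)$ with the infinite sum over $m$. This is legitimate because $\g_n(bD_q|u)$ is a finite linear combination of the powers $D_q^k$ for $0\le k\le n$, each of which may be applied term by term to the power series, with the rearranged series converging on $|xy|<1$. Once this is noted, the identity follows immediately.
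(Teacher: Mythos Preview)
Your proposal is correct and follows essentially the same route as the paper's own proof: convert to $u^{-\binom{n}{2}}\g_n(bD_q|u)$ acting on $\sum_m (xy)^m/(q;q)_m = 1/(xy;q)_\infty$, then invoke the earlier evaluation of $\g_n(bD_q|u)\{1/(ax;q)_\infty\}$ with $a=y$. If anything, you are more explicit than the paper in justifying the interchange of the finite-order operator with the infinite sum.
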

\begin{proof}
We have
    \begin{align*}
        \sum_{m=0}^{\infty}\Psi_{m}^{(q^{-n})}(b,x|uq^{-1})\frac{y^m}{(q;q)_{m}}&=u^{-\binom{n}{2}}\g_{n}(bD_{q}|u)\left\{\sum_{m=0}^{\infty}\frac{(xy)^m}{(q;q)_{m}}\right\}\\
        &=u^{-\binom{n}{2}}\g_{n}(bD_{q}|u)\left\{\frac{1}{(xy;q)_{\infty}}\right\}\\
        &=\frac{(1\oplus_{1,u}u^{1-n}by)_{q}^{(n)}}{(xy;q)_{\infty}}.
    \end{align*}
\end{proof}

\begin{theorem}
    \begin{equation*}
        \sum_{m=0}^{\infty}q^{\binom{m}{2}}\Psi_{m}^{(q^{-n})}(b,x|uq^{-1})\frac{y^m}{(q;q)_{m}}=(xy;q)_{\infty}\Psi_{n}^{(0,0,y)}(by,1|u).
    \end{equation*}
\end{theorem}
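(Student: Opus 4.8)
The plan is to run exactly the same operator calculus as in the two preceding generating-function theorems, but feeding in the $\E$-type $q$-exponential in place of the $\e$-type one. First I would invoke the opening theorem of this section, which evaluates $\g_{n}(bD_{q}|u)\{x^m\}$, to rewrite each coefficient $\Psi_{m}^{(q^{-n})}(b,x|uq^{-1})$ as $u^{-\binom{n}{2}}\g_{n}(bD_{q}|u)\{x^m\}$. The whole left-hand side then becomes $u^{-\binom{n}{2}}\sum_{m\geq0}q^{\binom{m}{2}}\g_{n}(bD_{q}|u)\{x^m\}\,y^m/(q;q)_{m}$.

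Next, since $\g_{n}(bD_{q}|u)=\sum_{k=0}^{n}\qbinom{n}{k}_{q}u^{\binom{n-k}{2}}b^kD_{q}^{k}$ is a \emph{finite} $\K$-linear combination of the operators $D_{q}^{k}$ acting in the variable $x$, with $y$ a constant for it, I would pull the operator through the $m$-sum. This leaves $\g_{n}(bD_{q}|u)$ applied to the single power series $\sum_{m\geq0}q^{\binom{m}{2}}(xy)^m/(q;q)_{m}$ in $x$. This series is entire in $xy$ (for $\vert q\vert<1$), so the termwise interchange of the finite operator with the summation is legitimate. The purpose of the inserted factor $q^{\binom{m}{2}}$ is precisely that the inner series is now an $\E$-type $q$-exponential: by the identity $\E_{q}(z)=(-z;q)_{\infty}$ recorded in the introduction it collapses to $(xy;q)_{\infty}$ (applied with argument $-xy$). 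The problem is thereby reduced to evaluating $u^{-\binom{n}{2}}\g_{n}(bD_{q}|u)\{(xy;q)_{\infty}\}$.

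Finally I would invoke the earlier theorem computing $\g_{n}(bD_{q}|u)\{(ax;q)_{\infty}\}$, which in the form produced by its proof carries a factor $u^{\binom{n}{2}}(ax;q)_{\infty}\Psi_{n}^{(0,0,a)}(ab,1|u)$, specialized to $a=y$. This yields $(xy;q)_{\infty}\Psi_{n}^{(0,0,y)}(yb,1|u)$ once the two normalizing powers $u^{\pm\binom{n}{2}}$ cancel, and $yb=by$ gives exactly the claimed right-hand side. Note this is the perfect $\E$-analogue of the preceding theorem, where $1/(xy;q)_{\infty}$ and the $\oplus$-evaluation of $\g_{n}$ on $1/(ax;q)_{\infty}$ played the roles that $(xy;q)_{\infty}$ and the evaluation of $\g_{n}$ on $(ax;q)_{\infty}$ play here.

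The genuinely mechanical steps — expanding $\Psi_{m}^{(q^{-n})}$ and the termwise action of $D_{q}^{k}$ on $x^m$ — are already packaged inside the two cited theorems, so no fresh computation is needed there. The only places requiring care are two bookkeeping points: justifying the operator/summation interchange (a finite differential operator against an entire series) and matching the $q$-exponential identity so as to land on exactly $(xy;q)_{\infty}$ and on the correct argument of $\Psi_{n}^{(0,0,y)}$, while keeping the compensating $u^{\binom{n}{2}}$ factors from the two input theorems in step. I expect this last sign-and-normalization reconciliation — where the $\E$-versus-$\e$ distinction meets the $u$-powers inherited from the evaluations of $\g_{n}$ on $x^m$ and on $(ax;q)_{\infty}$ — to be the main, if minor, obstacle.
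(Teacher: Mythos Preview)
Your proposal is correct and follows essentially the same route as the paper's own proof: rewrite $\Psi_{m}^{(q^{-n})}(b,x|uq^{-1})$ as $u^{-\binom{n}{2}}\g_{n}(bD_{q}|u)\{x^m\}$ via Theorem~1, pull the finite operator through the $m$-sum to land on $\g_{n}(bD_{q}|u)$ acting on the $\E$-type series $\sum_{m\ge0}q^{\binom{m}{2}}(xy)^m/(q;q)_m$, and then quote Theorem~3 (the evaluation of $\g_{n}(bD_{q}|u)$ on an infinite $q$-shifted factorial) with $a=y$. Your write-up is in fact more explicit than the paper's three-line argument, and your caveat about the sign/normalization bookkeeping is well placed: note that $\sum_{m}q^{\binom{m}{2}}(xy)^m/(q;q)_m=\E_q(xy)=(-xy;q)_\infty$, not $(xy;q)_\infty$, so the ``argument $-xy$'' remark should be revisited when you carry out the details (the paper glosses over this same point).
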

\begin{proof}
    \begin{align*}
        \sum_{m=0}^{\infty}q^{\binom{m}{2}}\Psi_{m}^{(q^{-n})}(b,x|uq^{-1})\frac{y^m}{(q;q)_{m}}&=u^{-\binom{n}{2}}\g_{n}(bD_{q}|u)\left\{\sum_{m=0}^{\infty}\frac{(xy)^m}{(q;q)_{m}}\right\}\\
        &=u^{-\binom{n}{2}}\g_{n}(bD_{q}|u)\left\{\sum_{m=0}^{\infty}q^{\binom{m}{2}}\frac{(xy)^m}{(q;q)_{m}}\right\}\\
        &=(xy;q)_{\infty}\Psi_{n}^{(0,0,y)}(by,1|u)
    \end{align*}
\end{proof}

\section{Mehler and Rogers type formulas}

\begin{theorem}[{\bf Ordinary Mehler's formula}]
    \begin{align*}
        &\sum_{k=0}^{\infty}\Psi_{k}^{(q^{-n})}(b,x|uq^{-1})\Psi_{k}^{(q^{-m})}(c,y|vq^{-1})z^k\\
        &\hspace{2cm}=\frac{1}{(q;q)_{m}}\sum_{k=0}^{n}\qbinom{n}{k}_{q}u^{\binom{k+1}{2}-nk}b^k\sum_{l=0}^{m}\qbinom{m}{k}_{q}\frac{v^{\binom{l+1}{2}-ml}(q;q)_{l}(cz)^l(q;q)_{l}}{(1-q^{l}xyz)(xyz;q)_{l}(q;q)_{l-k}}x^{l-k}\\
    &\hspace{4cm}\times\psi_{k}^{
    \scalemath{0.75}{
    \left(
    \begin{array}{c}
         0,0,q^{l+1}\\
          q^{l+1}xyz,q^{l-k+1}
    \end{array}
    \right)}}(-q^lxyz,1|q).
    \end{align*}
\end{theorem}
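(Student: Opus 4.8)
The plan is to reduce the bilinear sum to a single application of the operator $\g_{n}(bD_{q}|u)$ to a known generating function. Using the operator realization of the deformed homogeneous Al-Salam-Carlitz polynomials established earlier (in the normalization used throughout Section~3) I would write $\Psi_{k}^{(q^{-n})}(b,x|uq^{-1})=u^{-\binom{n}{2}}\g_{n}(bD_{q}|u)\{x^{k}\}$, where the operator $q$-differentiates in the variable $x$ only. Since the factor $\Psi_{k}^{(q^{-m})}(c,y|vq^{-1})$ and the power $z^{k}$ are constants for this operator, and since $D_{q}$ in $x$ sends $(xz)^{k}$ to $z^{k}D_{q}^{j}\{x^{k}\}$, I can commute the operator through the summation:
\[
\sum_{k=0}^{\infty}\Psi_{k}^{(q^{-n})}(b,x|uq^{-1})\Psi_{k}^{(q^{-m})}(c,y|vq^{-1})z^{k}=u^{-\binom{n}{2}}\g_{n}(bD_{q}|u)\left\{\sum_{k=0}^{\infty}\Psi_{k}^{(q^{-m})}(c,y|vq^{-1})(xz)^{k}\right\}.
\]

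The inner sum is then evaluated by the first generating-function theorem of Section~3, applied with generating variable $xz$ and spatial variable $y$; it produces $\frac{1}{(1-xyz)(q;q)_{m}}\Psi_{m}^{(0,q,qxyz)}(cxz,1|v)$. Expanding this deformed generalized Al-Salam-Carlitz polynomial by its definition and absorbing $\frac{1}{1-xyz}$ into $\frac{1}{(qxyz;q)_{l}}$ by means of $(xyz;q)_{l+1}=(1-xyz)(qxyz;q)_{l}$ (identity~(\ref{eqn_iden2})), the bracket becomes $\frac{1}{(q;q)_{m}}\sum_{l=0}^{m}\qbinom{m}{l}_{q}v^{\binom{l+1}{2}-ml}(q;q)_{l}(cz)^{l}\frac{x^{l}}{(xyz;q)_{l+1}}$. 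Applying $\g_{n}(bD_{q}|u)$ term by term now reproduces the prefactor $\frac{1}{(q;q)_{m}}$ and the outer factor $\qbinom{n}{k}_{q}u^{\binom{k+1}{2}-nk}b^{k}$ of the statement, so that everything rests on computing the single $q$-derivative $D_{q}^{k}\{x^{l}/(xyz;q)_{l+1}\}$.

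For this derivative I would invoke the $q$-Leibniz rule~(\ref{eqn_leibniz}) on the product $x^{l}\cdot\frac{1}{(xyz;q)_{l+1}}$, together with the iterated formula $D_{q}^{p}\left\{\frac{1}{(\alpha x;q)_{N}}\right\}=\frac{\alpha^{p}(q^{N};q)_{p}}{(\alpha x;q)_{N+p}}$, which follows by induction from $D_{q}\{1/(\alpha x;q)_{N}\}=\alpha(1-q^{N})/(\alpha x;q)_{N+1}$ and specializes, as $N\to\infty$, to the identity $D_{q}\{1/(ax;q)_{\infty}\}=a/(ax;q)_{\infty}$ used in the proof of Theorem~2. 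Using $D_{q}^{i}\{x^{l}\}=\tfrac{(q;q)_{l}}{(q;q)_{l-i}}x^{l-i}$ and $(q^{i}xyz;q)_{l+1+k-i}=(xyz;q)_{l+k+1}/(xyz;q)_{i}$ (again~(\ref{eqn_iden2})), the power $x^{l-k}$ factors out and the remaining finite sum becomes a polynomial in $W:=xyz$ alone.

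The hard part is the final step: identifying this finite sum in $W$ with the general Al-Salam-Carlitz polynomial $\psi_{k}^{\scalemath{0.75}{\left(\begin{array}{c}0,0,q^{l+1}\\ q^{l+1}xyz,q^{l-k+1}\end{array}\right)}}(-q^{l}xyz,1|q)$ carrying the stated rational arguments, and pinning down the accompanying prefactor $\frac{(q;q)_{l}}{(1-q^{l}xyz)(xyz;q)_{l}(q;q)_{l-k}}$. I expect this to require re-indexing the Leibniz summation by $i\mapsto k-i$ and repeatedly applying the $q$-shifted factorial identities~(\ref{eqn_iden1})--(\ref{eqn_iden3}) to rewrite $(q^{l+1};q)_{k-i}$, $(xyz;q)_{i}$ and $1/(q;q)_{l-i}$ into the shape of the $\psi_{k}$ summand. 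The manipulation is mechanical but constant-sensitive: it is precisely here that the signs, the powers of $q$, and the normalizing $q$-factorials must be tracked with care, and this is the step most likely to hide bookkeeping slips.
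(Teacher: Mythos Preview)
Your proposal is correct and follows essentially the same route as the paper: operator realization of $\Psi_{k}^{(q^{-n})}$, commutation through the sum, evaluation of the inner series by the Section~3 generating function, termwise application of $\g_{n}(bD_{q}|u)$, and a final $q$-Leibniz computation of $D_{q}^{k}\{x^{l}/(xyz;q)_{l+1}\}$ identified with a $\psi_{k}$ polynomial. The only cosmetic difference is that the paper applies Leibniz with $f=1/(xyz;q)_{l+1}$ and $g=(cxz)^{l}$ (so the reciprocal is differentiated first), whereas you take the factors in the opposite order; this is exactly the $i\mapsto k-i$ re-indexing you already anticipate, so no new idea is needed.
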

\begin{proof}
    \begin{align*}
        &\sum_{k=0}^{\infty}\Psi_{k}^{(q^{-n})}(b,x|uq^{-1})\Psi_{k}^{(q^{-m})}(c,y|vq^{-1})z^k\\
        &\hspace{0cm}=u^{-\binom{n}{2}}\sum_{k=0}^{\infty}\g_{n}(bD_{q}|u)\{x^k\}\Psi_{k}^{(q^{-m})}(c,y|vq^{-1})z^k\\
        &\hspace{0cm}=u^{-\binom{n}{2}}\g_{n}(bD_{q}|u)\left\{\sum_{k=0}^{\infty}\Psi_{k}^{(q^{-m})}(c,y|vq^{-1})(xz)^k\right\}\\
        &\hspace{0cm}=u^{-\binom{n}{2}}\g_{n}(bD_{q}|u)\left\{\frac{1}{(1-xyz)(q;q)_{m}}\Psi_{m}^{(0,q,qxyz)}(cxz,1|v)\right\}\\
        &\hspace{0cm}=\frac{1}{(q;q)_{m}}\sum_{k=0}^{n}\qbinom{n}{k}_{q}u^{\binom{k+1}{2}-nk}b^kD_{q}^k\left\{\sum_{l=0}^{m}\qbinom{m}{k}_{q}v^{\binom{l+1}{2}-ml}\frac{(q;q)_{l}}{(xyz;q)_{l+1}}(cxz)^l\right\}.
    \end{align*}
As
\begin{align*}
    &D_{q}^{k}\left\{\frac{(cxz)^l}{(xyz;q)_{l+1}}\right\}\\
    &=\sum_{i=0}^{k}\qbinom{k}{i}_{q}q^{i(i-k)}D_{q}^{i}\left\{\frac{1}{(xyz;q)_{l+1}}\right\}D_{q}^{k-i}(q^{i}cxz)^l\\
    &=\sum_{i=0}^{k}\qbinom{k}{i}_{q}q^{i(i-k)}\frac{(yz)^i(q^{l+1};q)_{i}}{(xyz;q)_{l}(q^lxyz;q)_{i+1}}\frac{(q^icz)}{}\\
    &=\sum_{i=0}^{k}\qbinom{k}{i}_{q}q^{i(i-k)}\frac{(yz)^i(q^{l+1};q)_{i}}{(xyz;q)_{l}(q^lxyz;q)_{i+1}}\frac{(q^icz)^l(q;q)_{l}}{(q;q)_{l-k}(q^{l-k+i};q)_{i}}x^{l-k+i}\\
    &=\frac{(cz)^l(q;q)_{l}}{(1-q^{l}xyz)(xyz;q)_{l}(q;q)_{l-k}}x^{l-k}\sum_{i=0}^{k}\qbinom{k}{i}_{q}\frac{q^{i(i-k)}(q^{l+1};q)_{i}}{(q^{l+1}xyz;q)_{i}(q^{l-k+1};q)_{i}}(q^lyzx)^{i}\\
    &=\frac{(cz)^l(q;q)_{l}}{(1-q^{l}xyz)(xyz;q)_{l}(q;q)_{l-k}}x^{l-k}\psi_{k}^{
    \scalemath{0.75}{
    \left(
    \begin{array}{c}
         0,0,q^{l+1}\\
          q^{l+1}xyz,q^{l-k+1}
    \end{array}
    \right)}}(-q^lxyz,1|q),
\end{align*}
then
\begin{align*}
    &\sum_{k=0}^{\infty}\Psi_{k}^{(q^{-n})}(b,x|uq^{-1})\Psi_{k}^{(q^{-m})}(c,y|vq^{-1})z^k\\
    &\hspace{1cm}=\frac{1}{(q;q)_{m}}\sum_{k=0}^{n}\qbinom{n}{k}_{q}u^{\binom{k+1}{2}-nk}b^k\sum_{l=0}^{m}\qbinom{m}{k}_{q}\frac{v^{\binom{l+1}{2}-ml}(q;q)_{l}(cz)^l(q;q)_{l}}{(1-q^{l}xyz)(xyz;q)_{l}(q;q)_{l-k}}x^{l-k}\\
    &\hspace{4cm}\times\psi_{k}^{
    \scalemath{0.75}{
    \left(
    \begin{array}{c}
         0,0,q^{l+1}\\
          q^{l+1}xyz,q^{l-k+1}
    \end{array}
    \right)}}(-q^lxyz,1|q)
\end{align*}
as claimed.
\end{proof}

\begin{theorem}[{\bf Mehler's formula}]
    \begin{align*}
        &\sum_{k=0}^{\infty}\Psi_{k}^{(q^{-n})}(b,x|uq^{-1})\Psi_{k}^{(q^{-m})}(c,y|vq^{-1})\frac{z^k}{(q;q)_{k}}\\
        &\hspace{2cm}=\frac{1}{(xyz;q)_{\infty}}\sum_{k=0}^{n}\qbinom{n}{k}_{q}u^{\binom{k+1}{2}-nk}b^k\\
        &\hspace{1cm}\times\sum_{l=0}^{k}\qbinom{k}{l}_{q}\qbinom{m}{k-l}_{q}v^{\binom{k-l}{2}}(q;q)_{k-l}(1\oplus_{1,v}v^{1-m+k-l}q^{l}cxz)_{q}^{(m-k+l)}(yz)^l(v^{1-m}q^lcz)^{k-l}.
    \end{align*}
\end{theorem}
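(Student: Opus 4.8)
The plan is to reuse the operator calculus that drives the earlier results in this section, taking the representation $\Psi_{k}^{(q^{-n})}(b,x|uq^{-1})=u^{-\binom{n}{2}}\g_{n}(bD_{q}|u)\{x^{k}\}$ from Theorem~1 (in the form already employed in the Ordinary Mehler proof) as the starting point. First I would substitute this for the left-hand factor of the summand. Since $\g_{n}(bD_{q}|u)$ is linear and acts only in $x$, while $\Psi_{k}^{(q^{-m})}(c,y|vq^{-1})z^{k}/(q;q)_{k}$ is inert in $x$, I would pull the operator past the $k$-summation and fold $x^{k}z^{k}=(xz)^{k}$ into the remaining series. This rewrites the left-hand side as $u^{-\binom{n}{2}}\g_{n}(bD_{q}|u)\big\{\sum_{k\geq 0}\Psi_{k}^{(q^{-m})}(c,y|vq^{-1})(xz)^{k}/(q;q)_{k}\big\}$.

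The inner series is a known deformed generating function: applying the theorem $\sum_{M\geq 0}\Psi_{M}^{(q^{-N})}(B,X|Uq^{-1})Y^{M}/(q;q)_{M}=(1\oplus_{1,U}U^{1-N}BY)_{q}^{(N)}/(XY;q)_{\infty}$ with $(N,B,X,U,Y)=(m,c,y,v,xz)$ gives $\sum_{k\geq 0}\Psi_{k}^{(q^{-m})}(c,y|vq^{-1})(xz)^{k}/(q;q)_{k}=(1\oplus_{1,v}v^{1-m}cxz)_{q}^{(m)}/(xyz;q)_{\infty}$. Thus the whole problem reduces to evaluating $u^{-\binom{n}{2}}\g_{n}(bD_{q}|u)$ on the function $(1\oplus_{1,v}v^{1-m}cxz)_{q}^{(m)}/(xyz;q)_{\infty}$ of $x$. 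Expanding $\g_{n}(bD_{q}|u)=u^{\binom{n}{2}}\sum_{k=0}^{n}\qbinom{n}{k}_{q}u^{\binom{k+1}{2}-nk}b^{k}D_{q}^{k}$ cancels the prefactor $u^{-\binom{n}{2}}$ and reproduces the outer sum $\frac{1}{(xyz;q)_{\infty}}\sum_{k=0}^{n}\qbinom{n}{k}_{q}u^{\binom{k+1}{2}-nk}b^{k}(\cdots)$ already visible in the claim, leaving only the computation of $D_{q}^{k}$ applied to the product.

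To finish I would apply the $q$-Leibniz rule (\ref{eqn_leibniz}) to $D_{q}^{k}$ of the product of $f(x)=1/(xyz;q)_{\infty}$ with the polynomial $g(x)=(1\oplus_{1,v}v^{1-m}cxz)_{q}^{(m)}$, exactly in the manner used in the proofs of Theorem~\ref{theo_abc} and the Ordinary Mehler formula. The split index becomes the summation variable $l$: the Leibniz rule contributes $\qbinom{k}{l}_{q}$, the factor $D_{q}^{l}\{1/(xyz;q)_{\infty}\}=(yz)^{l}/(xyz;q)_{\infty}$ supplies the $(yz)^{l}$, and the complementary $D_{q}^{k-l}$ acts on the argument-shifted polynomial $(1\oplus_{1,v}v^{1-m}cq^{l}xz)_{q}^{(m)}$. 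The main obstacle is the bookkeeping of this last derivative: after differentiating term by term and reindexing the internal sum by $p=j-(k-l)$, one must recognize the resulting finite sum as $\qbinom{m}{k-l}_{q}v^{\binom{k-l}{2}}(q;q)_{k-l}(v^{1-m}q^{l}cz)^{k-l}$ times the lower-degree deformed power $(1\oplus_{1,v}v^{1-m+k-l}q^{l}cxz)_{q}^{(m-k+l)}$. The collapse is powered by the $q$-multinomial identity $\qbinom{m}{p+k-l}_{q}\frac{(q;q)_{p+k-l}}{(q;q)_{p}}=\qbinom{m}{k-l}_{q}(q;q)_{k-l}\qbinom{m-k+l}{p}_{q}$, together with the factorization of the exponent $\binom{j+1}{2}-mj$ of $v$ (with $j=p+k-l$) into the pieces $\binom{k-l}{2}+(1-m)(k-l)$, $\binom{p}{2}$, and $(1-m+k-l)p$; substituting the folded expression back into the double sum yields the stated right-hand side.
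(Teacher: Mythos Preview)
Your proposal is correct and follows essentially the same route as the paper's own proof: represent the first factor via $\g_{n}(bD_{q}|u)$, sum the resulting generating function in $xz$ using the earlier theorem, expand $\g_{n}$, and apply the $q$-Leibniz rule with split index $l$. In fact you supply more detail than the paper on the final step, explicitly identifying the $q$-multinomial identity and the $v$-exponent factorization that collapse $D_{q}^{k-l}\{(1\oplus_{1,v}v^{1-m}cq^{l}xz)_{q}^{(m)}\}$ into the stated form.
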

\begin{proof}
    \begin{align*}
        &\sum_{k=0}^{\infty}\Psi_{k}^{(q^{-n})}(b,x|uq^{-1})\Psi_{k}^{(q^{-m})}(c,y|vq^{-1})\frac{z^k}{(q;q)_{k}}\\
        &=u^{-\binom{n}{2}}\g_{n}(bD_{q}|u)\left\{\sum_{k=0}^{\infty}\Psi_{k}^{(q^{-m})}(c,y|vq^{-1})\frac{(xz)^k}{(q;q)_{k}}\right\}\\
        &=u^{-\binom{n}{2}}\g_{n}(bD_{q}|u)\left\{\frac{(1\oplus_{1,v}v^{1-m}cxz)_{q}^{(m)}}{(xyz;q)_{\infty}}\right\}\\
        &=\sum_{k=0}^{n}\qbinom{n}{k}_{q}u^{\binom{k+1}{2}-nk}b^kD_{q}^k\left\{\frac{(1\oplus_{1,v}v^{1-m}cxz)_{q}^{(m)}}{(xyz;q)_{\infty}}\right\}\\
        &=\sum_{k=0}^{n}\qbinom{n}{k}_{q}u^{\binom{k+1}{2}-nk}b^k\sum_{l=0}^{k}\qbinom{k}{l}_{q}q^{l(l-k)}D_{q}^{k}\left\{\frac{1}{(xyz;q)_{\infty}}\right\}D_{q}^{k-l}(1\oplus_{1,v}v^{1-m}q^{l}cxz)_{q}^{(m)}\\
        &=\frac{1}{(xyz;q)_{\infty}}\sum_{k=0}^{n}\qbinom{n}{k}_{q}u^{\binom{k+1}{2}-nk}b^k\\
        &\hspace{1cm}\times\sum_{l=0}^{k}\qbinom{k}{l}_{q}\qbinom{m}{k-l}_{q}v^{\binom{k-l}{2}}(q;q)_{k-l}(1\oplus_{1,v}v^{1-m+k-l}q^{l}cxz)_{q}^{(m-k+l)}(yz)^l(v^{1-m}q^lcz)^{k-l}
    \end{align*}
\end{proof}

\begin{theorem}[{\bf Ordinary Rogers formula}]
\begin{multline*}
    \sum_{n=0}^{\infty}\sum_{m=0}^{\infty}\Psi_{n+m}^{(q^{-k})}(b,x|uq^{-1})y^nz^m\\=\frac{1}{1-xy}\sum_{l=0}^{k}\qbinom{k}{l}_{q}u^{\binom{l+1}{2}-lk}b^l\sum_{i=0}^{l}\qbinom{l}{i}_{q}\frac{y^i}{(qxy;q)_{i}}\frac{z^{l-i}}{(q^ixz;q)_{l-i+1}}.
\end{multline*}
\end{theorem}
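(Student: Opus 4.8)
The plan is to read $\Psi_{n+m}^{(q^{-k})}(b,x|uq^{-1})$ as the normalized operator $u^{-\binom{k}{2}}\g_{k}(bD_{q}|u)$ acting on the monomial $x^{n+m}$, exactly as in the generating-function theorems above, and then to move this ($x$-)operator through the double sum over $n,m$. Writing
\[
u^{-\binom{k}{2}}\g_{k}(bD_{q}|u)=\sum_{l=0}^{k}\qbinom{k}{l}_{q}u^{\binom{l+1}{2}-kl}b^{l}D_{q}^{l},
\]
a finite-order $q$-differential operator in $x$ alone (with $y,z$ inert), the interchange is justified for $|xy|,|xz|<1$, and the bracketed series is an elementary double geometric sum:
\[
\sum_{n=0}^{\infty}\sum_{m=0}^{\infty}x^{n+m}y^{n}z^{m}=\Bigl(\sum_{n\geq0}(xy)^{n}\Bigr)\Bigl(\sum_{m\geq0}(xz)^{m}\Bigr)=\frac{1}{(1-xy)(1-xz)}.
\]
Thus the whole problem collapses to evaluating $u^{-\binom{k}{2}}\g_{k}(bD_{q}|u)\bigl\{1/\bigl((1-xy)(1-xz)\bigr)\bigr\}$, the natural two-variable analogue of the single-sum computation $\g_{n}(bD_{q}|u)\{1/(1-xy)\}$ carried out earlier.

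For the operator evaluation I would first record the one-factor rule
\[
D_{q}^{i}\Bigl\{\frac{1}{1-ax}\Bigr\}=\frac{(q;q)_{i}\,a^{i}}{(ax;q)_{i+1}},
\]
which drops out of $1/(1-ax)=\sum_{j}(ax)^{j}$ via $D_{q}^{i}\{x^{j}\}=\frac{(q;q)_{j}}{(q;q)_{j-i}}x^{j-i}$ and the summation $\sum_{j\geq0}\qbinom{j+i}{i}_{q}w^{j}=1/(w;q)_{i+1}$; it is the finite-product counterpart of $D_{q}^{k}\{1/(ax;q)_{\infty}\}=a^{k}/(ax;q)_{\infty}$ already used above. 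Then, for each $l$, I split $D_{q}^{l}$ across the product by the $q$-Leibniz rule \eqref{eqn_leibniz} with $f=1/(1-xy)$ and $g=1/(1-xz)$, so that the shift $g(q^{i}x)=1/(1-q^{i}xz)$ generates the denominators $(q^{i}xz;q)_{l-i+1}$ and attaches the factors $q^{i}$ to $z$:
\[
D_{q}^{l}\Bigl\{\frac{1}{(1-xy)(1-xz)}\Bigr\}=\sum_{i=0}^{l}q^{i(l-i)}\qbinom{l}{i}_{q}\frac{(q;q)_{i}\,y^{i}}{(xy;q)_{i+1}}\cdot\frac{(q;q)_{l-i}\,(q^{i}z)^{l-i}}{(q^{i}xz;q)_{l-i+1}}.
\]
Because $(xy;q)_{i+1}=(1-xy)(qxy;q)_{i}$, the factor $1/(1-xy)$ is common to every term and pulls out front --- exactly the prefactor on the right-hand side --- leaving terms in $y^{i}/(qxy;q)_{i}$ and $z^{l-i}/(q^{i}xz;q)_{l-i+1}$. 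Reinstating the $l$-sum with its weight $\qbinom{k}{l}_{q}u^{\binom{l+1}{2}-kl}b^{l}$ and collecting the remaining $q$-powers and $(q;q)$-symbols against the inner $q$-binomial coefficient produces the claimed double sum over $0\leq i\leq l\leq k$.

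The one genuinely delicate point is the Leibniz step: one must keep the argument dilation $g\mapsto g(q^{i}x)$ straight, since it is precisely this shift that converts $1/(1-xz)$ into the shifted $q$-factorials $(q^{i}xz;q)_{l-i+1}$ and pins the powers $q^{i}$ onto $z$, while the $y$-factor is differentiated unshifted and so contributes the clean $1/(1-xy)$ prefactor. Everything else is forced by the structure already in place for the single-sum generating functions: the reduction to the operator, the geometric summation, and the extraction of $1/(1-xy)$ require no idea beyond \eqref{eqn_leibniz} and the one-factor rule, so the proof is a direct two-variable elaboration of the identity $\sum_{m}\Psi_{m}^{(q^{-n})}(b,x|uq^{-1})y^{m}=\frac{1}{(1-xy)(q;q)_{n}}\Psi_{n}^{(0,q,qxy)}(by,1|u)$ proved above.
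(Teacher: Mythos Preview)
Your approach is exactly the paper's: write $\Psi_{n+m}^{(q^{-k})}(b,x|uq^{-1})=u^{-\binom{k}{2}}\g_{k}(bD_{q}|u)\{x^{n+m}\}$, pass the finite-order operator through the double sum to obtain $u^{-\binom{k}{2}}\g_{k}(bD_{q}|u)\bigl\{1/((1-xy)(1-xz))\bigr\}$, expand $\g_{k}$ term by term, split each $D_{q}^{l}$ across the product by the $q$-Leibniz rule together with the one-factor formula for $D_{q}^{i}\{1/(1-ax)\}$, and finally extract the common factor $1/(1-xy)$. One cosmetic slip: in your displayed Leibniz step the exponent should be $q^{i(i-l)}$ rather than $q^{i(l-i)}$ (the paper's applications all use $q^{i(i-l)}$, the sign in the stated rule \eqref{eqn_leibniz} being a typo), and with that sign the power cancels against $(q^{i})^{l-i}$ so the ``collecting'' you describe goes through verbatim.
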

\begin{proof}
    \begin{align*}
        &\sum_{n=0}^{\infty}\sum_{m=0}^{\infty}\Psi_{n+m}^{(q^{-k})}(b,x|uq^{-1})y^nz^m\\
        &\hspace{1cm}=u^{-\binom{k}{2}}\sum_{n=0}^{\infty}\sum_{m=0}^{\infty}\g_{k}(bD_{q}|u)\{(xy)^n(xz)^m\}\\
        &\hspace{1cm}=u^{-\binom{k}{2}}\g_{k}(bD_{q}|u)\left\{\sum_{n=0}^{\infty}\sum_{m=0}^{\infty}(xy)^n(xz)^m\right\}\\
        &\hspace{1cm}=u^{-\binom{k}{2}}\g_{k}(bD_{q}|u)\left\{\frac{1}{1-xy}\frac{1}{1-xz}\right\}\\
        &\hspace{1cm}=\sum_{l=0}^{k}\qbinom{k}{l}_{q}u^{\binom{l+1}{2}-lk}b^lD_{q}^{l}\left\{\frac{1}{1-xy}\frac{1}{1-xz}\right\}\\
        &\hspace{1cm}=\sum_{l=0}^{k}\qbinom{k}{l}_{q}u^{\binom{l+1}{2}-lk}b^l\sum_{i=0}^{l}\qbinom{l}{i}_{q}q^{i(i-l)}D_{q}^{i}\left\{\frac{1}{1-xy}\right\}D_{q}^{l-i}\left\{\frac{1}{1-q^ixz}\right\}\\
        &\hspace{1cm}=\sum_{l=0}^{k}\qbinom{k}{l}_{q}u^{\binom{l+1}{2}-lk}b^l\sum_{i=0}^{l}\qbinom{l}{i}_{q}\frac{y^i}{(xy;q)_{i+1}}\frac{z^{l-i}}{(q^ixz;q)_{l-i+1}}\\
        &\hspace{1cm}=\frac{1}{1-xy}\sum_{l=0}^{k}\qbinom{k}{l}_{q}u^{\binom{l+1}{2}-lk}b^l\sum_{i=0}^{l}\qbinom{l}{i}_{q}\frac{y^i}{(qxy;q)_{i}}\frac{z^{l-i}}{(q^ixz;q)_{l-i+1}}
    \end{align*}
\end{proof}

\begin{theorem}[{\bf Rogers formula}]
\begin{multline*}
    \sum_{n=0}^{\infty}\sum_{m=0}^{\infty}\Psi_{n+m}^{(q^{-k})}(b,x|uq^{-1})\frac{y^n}{(q;q)_{n}}\frac{z^m}{(q;q)_{m}}\\
    =\frac{1}{(xy,xz;q)_{\infty}}\sum_{l=0}^{k}\qbinom{k}{l}_{q}u^{\binom{l+1}{2}-kl}b^l\Phi_{l}^{(xz)}(y,z|q).
\end{multline*}
\end{theorem}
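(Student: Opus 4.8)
The plan is to imitate the proof of the Ordinary Rogers formula, replacing the bare geometric summations by $q$-exponential ones. First I would invoke the operator representation of the opening theorem in the form used throughout this section, $\Psi_{n+m}^{(q^{-k})}(b,x|uq^{-1})=u^{-\binom{k}{2}}\g_{k}(bD_{q}|u)\{x^{n+m}\}$, and note that $y$ and $z$ are scalars with respect to the $x$-operator $\g_{k}(bD_{q}|u)$, so the double series can be moved inside the operator:
\begin{equation*}
\sum_{n=0}^{\infty}\sum_{m=0}^{\infty}\Psi_{n+m}^{(q^{-k})}(b,x|uq^{-1})\frac{y^{n}}{(q;q)_{n}}\frac{z^{m}}{(q;q)_{m}}=u^{-\binom{k}{2}}\g_{k}(bD_{q}|u)\left\{\sum_{n=0}^{\infty}\frac{(xy)^{n}}{(q;q)_{n}}\sum_{m=0}^{\infty}\frac{(xz)^{m}}{(q;q)_{m}}\right\}.
\end{equation*}
By the defining series of $\e_{q}$, the bracketed factor collapses to $\e_{q}(xy)\,\e_{q}(xz)=1/(xy,xz;q)_{\infty}$, so the whole quantity equals $u^{-\binom{k}{2}}\g_{k}(bD_{q}|u)\{1/(xy,xz;q)_{\infty}\}$.

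The second step is to expand the operator explicitly as $u^{-\binom{k}{2}}\g_{k}(bD_{q}|u)=\sum_{l=0}^{k}\qbinom{k}{l}_{q}u^{\binom{l+1}{2}-kl}b^{l}D_{q}^{l}$, which reduces the problem to evaluating $D_{q}^{l}\{1/(xy,xz;q)_{\infty}\}$. This is exactly the Leibniz-rule computation already performed in the proof of the earlier theorem on $\g_{n}(aD_{q}|u)\{1/(bx,cx;q)_{\infty}\}$ and again inside Theorem \ref{theo_abc}: applying Eq.(\ref{eqn_leibniz}) to the two-factor product, using $D_{q}^{j}\{1/(ax;q)_{\infty}\}=a^{j}/(ax;q)_{\infty}$ on each piece, letting the factor $(zq^{m})^{l-m}$ produced by the shifted argument cancel the accompanying Leibniz $q$-power weight, and rewriting $(xzq^{m};q)_{\infty}=(xz;q)_{\infty}/(xz;q)_{m}$, I would obtain
\begin{equation*}
D_{q}^{l}\left\{\frac{1}{(xy,xz;q)_{\infty}}\right\}=\frac{1}{(xy,xz;q)_{\infty}}\sum_{m=0}^{l}\qbinom{l}{m}_{q}(xz;q)_{m}y^{m}z^{l-m}=\frac{\Phi_{l}^{(xz)}(y,z|q)}{(xy,xz;q)_{\infty}},
\end{equation*}
the last equality being the definition Eq.(\ref{eqn_han}) of the Hahn polynomial with parameter $\alpha=xz$.

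Finally I would substitute this evaluation back into the expanded operator and pull the common factor $1/(xy,xz;q)_{\infty}$ out of the sum over $l$, which yields precisely
\begin{equation*}
\frac{1}{(xy,xz;q)_{\infty}}\sum_{l=0}^{k}\qbinom{k}{l}_{q}u^{\binom{l+1}{2}-kl}b^{l}\Phi_{l}^{(xz)}(y,z|q),
\end{equation*}
the claimed right-hand side. The only delicate point is the Leibniz expansion of $D_{q}^{l}$ acting on the product $1/(xy,xz;q)_{\infty}$, together with the cancellation of the $q$-power weights against the shift in $(xzq^{m};q)_{\infty}$; but since this identical manipulation is recorded verbatim in the earlier theorems, it may simply be quoted rather than redone, so I anticipate no substantive obstacle beyond keeping that bookkeeping straight.
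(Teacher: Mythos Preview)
Your proposal is correct and follows essentially the same approach as the paper's own proof: both invoke the operator representation $\Psi_{n+m}^{(q^{-k})}(b,x|uq^{-1})=u^{-\binom{k}{2}}\g_{k}(bD_{q}|u)\{x^{n+m}\}$, pull the operator outside the double sum, collapse the inner series to $1/(xy,xz;q)_{\infty}$, expand $\g_{k}$ term by term, and evaluate $D_{q}^{l}\{1/(xy,xz;q)_{\infty}\}$ via the Leibniz rule to produce $\Phi_{l}^{(xz)}(y,z|q)/(xy,xz;q)_{\infty}$. The only cosmetic difference is that the paper carries out the Leibniz computation inline whereas you quote it from the earlier theorems; the steps are otherwise identical.
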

\begin{proof}
    \begin{align*}
        &\sum_{n=0}^{\infty}\sum_{m=0}^{\infty}\Psi_{n+m}^{(q^{-k})}(b,x|uq^{-1})\frac{y^n}{(q;q)_{n}}\frac{z^m}{(q;q)_{m}}\\
        &\hspace{1cm}=u^{-\binom{k}{2}}\sum_{n=0}^{\infty}\sum_{m=0}^{\infty}\g_{k}(bD_{q}|u)\left\{\frac{(xy)^n}{(q;q)_{n}}\frac{(xz)^m}{(q;q)_{m}}\right\}\\
        &\hspace{1cm}=u^{-\binom{k}{2}}\g_{k}(bD_{q}|u)\left\{\sum_{n=0}^{\infty}\sum_{m=0}^{\infty}\frac{(xy)^n}{(q;q)_{n}}\frac{(xz)^m}{(q;q)_{m}}\right\}\\
        &\hspace{1cm}=u^{-\binom{k}{2}}\g_{k}(bD_{q}|u)\left\{\frac{1}{(xy;q)_{\infty}}\frac{1}{(xz;q)_{\infty}}\right\}\\
        &\hspace{1cm}=\sum_{l=0}^{k}\qbinom{k}{l}_{q}u^{\binom{l+1}{2}-kl}b^lD_{q}^l\left\{\frac{1}{(xy;q)_{\infty}}\frac{1}{(xz;q)_{\infty}}\right\}\\
        &\hspace{1cm}=\sum_{l=0}^{k}\qbinom{k}{l}_{q}u^{\binom{l+1}{2}-kl}b^l\sum_{i=0}^{l}\qbinom{l}{i}_{q}q^{i(i-l)}D_{q}^i\left\{\frac{1}{(xy;q)_{\infty}}\right\}D_{q}^{l-i}\left\{\frac{1}{(q^ixz;q)_{\infty}}\right\}\\
        &\hspace{1cm}=\sum_{l=0}^{k}\qbinom{k}{l}_{q}u^{\binom{l+1}{2}-kl}b^l\sum_{i=0}^{l}\qbinom{l}{i}_{q}\frac{y^i}{(xy;q)_{\infty}}\frac{z^{l-i}}{(q^ixz;q)_{\infty}}\\
        &\hspace{1cm}=\frac{1}{(xy,xz;q)_{\infty}}\sum_{l=0}^{k}\qbinom{k}{l}_{q}u^{\binom{l+1}{2}-kl}b^l\sum_{i=0}^{l}\qbinom{l}{i}_{q}(xz;q)_{i}y^iz^{l-i}\\
        &\hspace{1cm}=\frac{1}{(xy,xz;q)_{\infty}}\sum_{l=0}^{k}\qbinom{k}{l}_{q}u^{\binom{l+1}{2}-kl}b^l\Phi_{l}^{(xz)}(y,z|q)
    \end{align*}
as claimed.    
\end{proof}

\end{document}